\newtheorem{theorem}{Theorem}[section]
\newtheorem{proposition}{Proposition}[section]
\newtheorem*{theorem*}{Theorem}
\newtheorem{definition}{Definition}[section]
\newtheorem{lemma}{Lemma}[section]
\numberwithin{equation}{section}
\newcommand{\dee}{\mathrm{d}}
\newcommand{\deee}{\hspace{2 pt} \mathrm{d}}
\newcommand{\rest}{\upharpoonright}
\newcommand{\ov}[1]{\overline{#1}}
\newcommand{\nml}{\left \vert \left \vert}
\newcommand{\nmr}{\right \vert \right \vert}
\renewcommand{\hat}{\widehat}
\renewcommand{\tilde}{\widetilde}
\begin{document}

\title{The extension problem in free harmonic analysis}
\author{Peter Burton and Kate Juschenko}

\maketitle

\begin{abstract}

This paper studies certain aspects of harmonic analysis on nonabelian free groups. We focus on the concept of a positive definite function on the free group and our primary goal is to understand how such functions can be extended from balls of finite radius to the entire group. Previous work showed that such extensions always exist and we study the problem of simultaneous extension of multiple positive definite functions. More specifically, we define a concept of `relative energy' which measures the proximity between a pair of positive definite functions, and show that a pair of positive definite functions on a finite ball can be extended to the entire group without increasing their relative energy. The proof is analytic, involving differentiation of noncommutative Szeg\"{o} parameters.

\end{abstract}

\tableofcontents

\section{Introduction} \label{part.intro}

Noncommutative harmonic analysis presents numerous challenges that are absent in the commutative setting due to the existence of higher dimensional irreducible unitary representations. These challenges are greatest in the context of nonamenable discrete groups, where the unitary dual is extremely complicated. One situation where  harmonic analysis on nonamenable discrete groups has proved somewhat tractable is that of free groups. Previous work in this direction includes \cite{MR2316876}, \cite{MR590864}, \cite{MR573099}, \cite{MR1152801}, \cite{MR665019}, \cite{MR710827} and  \cite{MR520930}.\\
\\
A basic task in harmonic analysis is to extend positive definite functions from finite subsets of a group to the entire group. Such an extension always exists for positive definite functions defined on intervals in $\mathbb{Z}$ via the construction with Szeg\"{o} parameters, but it already can fail to exist for positive definite functions defined on disks in $\mathbb{Z}^2$. It seems freeness is essential to the existence of such extensions and it was established in \cite{MR2316876} that a positive definite function defined on a ball of finite radius in a finitely generated free group always has an extension to the entire group. \\
\\
In this paper we study the question of simultaneous extension of positive definite functions on free groups. We write $\mathbb{F}_r$ for the free group of rank $r$. There is a concept of the distance between two positive definite functions that we think of as the relative energy required to transform between the configurations of vectors represented by the functions. There is then a natural question about the existence of simultaneous extensions without energy increase which arises in the study of intertwining maps between unitary representations of $\mathbb{F}_r$, for example when considering representations of $\mathbb{F}_r \times \mathbb{F}_r$ in the context of Connes' embedding conjecture. Our main result is that a pair of positive definite functions can be extended simultaneously without increasing their relative energy. The construction relies on an variational argument about certain numerical `noncommutative Szeg\"{o} parameters' that govern the recursive extension procedure.\\
\\
In this paper we never consider free groups of rank higher than two, and consequently we denote the free group of rank two simply by $\mathbb{F}$. It is easy to see that the results can be generalized to finitely generated free groups of ranks higher than two. We now present the definitions needed to formalize the previous discussion and to state our main results.

\subsection{Preliminary information} \label{subsec.basicinfo}

Fix a pair of free generators $a$ and $b$ for $\mathbb{F}$ and endow $\mathbb{F}$ with the standard Cayley graph structure corresponding to left multiplication by these generators. If $\Gamma$ is a quotient of $\mathbb{F}$ we identify $a$ and $b$ with their images in $\Gamma$. We write $e$ for the identity of $\mathbb{F}$. \\
\\
We consider the word length associated to $a$ and $b$, which we denote by $|\cdot|$. For $r \in \mathbb{N}$ let $\mathbb{B}_r = \{g \in \mathbb{F}:|g| \leq r\}$ be the ball of radius $r$ around $e$. Write $K_r$ for the cardinality of $\mathbb{B}_r$.

\begin{definition} Let $d \in \mathbb{N}$ and let $F$ be a finite subset of $\mathbb{F}$. We define a function $\mathsf{C}:F \to \mathbb{C}$ to be \textbf{positive definite} if we have the fundamental inequality \begin{equation} \label{eq.posdef} \sum_{g,h \in E} \alpha(g) \ov{\alpha(h)} \mathsf{C}(h^{-1}g) \geq 0 \end{equation} for every subset $E$ of $\mathbb{F}$ with $E^{-1}E \subseteq F$ and every function $\alpha:E \to \mathbb{C}$.\\
\\
We define $\mathsf{C}$ to be \textbf{strictly positive definite} if $\mathsf{C}$ is positive definite and the inequality in (\ref{eq.posdef}) is saturated only when $\alpha$ is identically $0$. We define a function $\mathsf{C}:\mathbb{F} \to \mathbb{C}$ to be (strictly) positive definite if $\mathsf{C} \rest F$ is (strictly) positive definite for every finite $F \subseteq \mathbb{F}$. \end{definition}

A positive definite function on the free group can be thought of as a noncommutative analog of an infinite positive definite Toeplitz matrix. We will always assume the following normalization condition.

\begin{definition} If $\mathsf{C}$ is a positive definite function whose domain contains the identity $e$ of $\mathbb{F}$, we define $\mathsf{C}$ to be \textbf{normalized} if $\mathsf{C}(e) = 1$. \end{definition}

We denote the space of normalized strictly positive definite functions $\mathsf{C}:\mathbb{B}_r \to \mathbb{C}$ by $\mathrm{NSPD}_r$. Note that $\mathbb{B}_r^{-1}\mathbb{B}_r = \mathbb{B}_{2r}$. Therefore if $\mathsf{C} \in \mathrm{NSPD}_{2r}$ we can regard it as a positive definite kernel on the set $\mathbb{B}_r$. By Theorem C.2.3 in \cite{MR2415834} there exists a Hilbert space $\mathscr{X}(\mathsf{C})$ and a function $\Phi_\mathsf{C}: \mathbb{B}_r \to \mathscr{X}(\mathsf{C})$ such that \begin{equation} \label{eq.mari-0} \langle \Phi_\mathsf{C}(g),\Phi_\mathsf{C}(h) \rangle = \mathsf{C}(h^{-1}g) \end{equation} for all $g,h \in \mathbb{B}_r$. Moreover, we may and will assume the range of $\Phi_\mathsf{C}$ spans $\mathscr{X}(\mathsf{C})$. The hypothesis that $\mathsf{C}$ is strictly positive definite ensures that the range of $\Phi_\mathsf{C}$ will be linearly independent and the normalization $\mathsf{C}(e) = 1$ ensures that each vector $\Phi_\mathsf{C}(g)$ has length $1$. We will refer to the range of $\Phi_\mathsf{C}$ as the canonical basis for $\mathscr{X}(\mathsf{C})$.

\begin{definition} We say that $(\mathscr{X}(\mathsf{C}),\Phi_\mathsf{C})$ as above is a \textbf{realization} of $\mathsf{C}$. \end{definition}

We can construct a realization of a positive definite function $\mathsf{C}:\mathbb{F} \to \mathbb{C}$ in the same way, obtaining a Hilbert space $\mathscr{X}(\mathsf{C})$ and a function $\Phi_\mathsf{C}:\mathbb{F} \to \mathscr{X}(\mathsf{C})$ such that the span of the range of $\Phi_\mathsf{C}$ is dense in $\mathscr{X}(\mathsf{C})$.\\
\\
It is clear that given two realizations of the same positive definite function there exists a natural unitary isomorphism from one realization Hilbert space to the other. This isomorphism transforms a canonical basis vector in one realization to the canonical basis vector in another realization having the same index. If $\mathsf{C}:\mathbb{F} \to \mathbb{C}$ is positive definite then the function $g \mapsto \Phi_\mathsf{C}(hg)$ is a realization of $\Phi_\mathsf{C}$ for any $h \in \mathbb{F}$. Thus we may make the following definition.

\begin{definition} Let $d \in \mathbb{N}$ and let $\mathsf{C}:\mathbb{F} \to \mathbb{C}$ be positive definite. Then any realization of $\mathsf{C}$ defines an \textbf{associated unitary representation} of $\mathbb{F}$ on $\mathscr{X}(\mathsf{C})$ denoted by $\rho_\mathsf{C}$ and given by the translation $\rho_\mathsf{C}(h)  \Phi_\mathsf{C}(g) = \Phi_\mathsf{C}(hg)$ for $g,h \in \mathbb{F}$. \end{definition}

We now make the main definition we will be studying.

\begin{definition} Let $\mathsf{C},\mathsf{D} \in \mathrm{NSPD}_{2r}$. Let $(\mathscr{X}(\mathsf{C}),\Phi_\mathsf{C})$ and $(\mathscr{X}(\mathsf{D}),\Phi_\mathsf{D})$ be realizations of $\mathsf{C}$ and $\mathsf{D}$ respectively. Define the \textbf{transport operator} $t[\mathsf{C},\mathsf{D}]:\mathscr{X}(\mathsf{C}) \to \mathscr{X}(\mathsf{D})$ by setting \[ t[\mathsf{C},\mathsf{D}]  \sum_{g \in \mathbb{B}_r} \alpha(g)  \Phi_\mathsf{C}(g)  = \sum_{g \in \mathbb{B}_r} \alpha(g) \Phi_\mathsf{D}(g) \] for functions $\alpha:\mathbb{B}_r \to \mathbb{C}$. We refer to the square of the operator norm of $t[\mathsf{C},\mathsf{D}]$ as the \textbf{relative energy} of the pair $(\mathsf{C},\mathsf{D})$ and denote it by $\mathfrak{e}(\mathsf{C},\mathsf{D})$.\\
\\
If $\mathsf{C},\mathsf{D}:\mathbb{F} \to \mathbb{C}$ are strictly positive definite we define the relative energy of the pair $(\mathsf{C},\mathsf{D})$ to be \[ \sup_{r \in \mathbb{N}} \mathfrak{e}(\mathsf{C} \rest \mathbb{B}_r,\mathsf{D} \rest \mathbb{B}_r) \] We continue to denote it by $\mathfrak{e}(\mathsf{C},\mathsf{D})$. In general we may have $\mathfrak{e}(\mathsf{C},\mathsf{D}) = \infty$. If $\mathfrak{e}(\mathsf{C},\mathsf{D}) < \infty$ then there is a naturally defined transport operator from $\mathscr{X}(\mathsf{C})$ to $\mathscr{X}(\mathsf{D})$, which we continue to denote by $t[\mathsf{C},\mathsf{D}]$. \label{def.transportop} \end{definition}

Note that we have assumed $\mathsf{C}(e) = \mathsf{D}(e) = 1$ so that $\mathfrak{e}(\mathsf{C},\mathsf{D}) \geq 1$ for all $\mathsf{C},\mathsf{D}$  and $\mathfrak{e}(\mathsf{C},\mathsf{D}) = 1$ if and only if $\mathsf{C} = \mathsf{D}$. \\\
\\
The connection between relative energy and intertwining maps between unitary representations of $\mathbb{F}$ comes from the fact that if $\mathsf{C},\mathsf{D}:\mathbb{F} \to \mathbb{C}$ are positive definite and $\mathfrak{e}(\mathsf{C},\mathsf{D}) = 1$ then $t[\mathsf{C},\mathsf{D}]$ is a unitary map intertwining $\rho_\mathsf{C}$ and $\rho_\mathsf{D}$. If $1< \mathfrak{e}(\mathsf{C},\mathsf{D}) < \infty$ then $t[\mathsf{C},\mathsf{D}]$ is still a well-defined intertwining map between the associated representations $\rho_\mathsf{C}$ and $\rho_\mathsf{D}$ although it is no longer unitary.

\subsection{Main result} \label{sec.ilhanomar}

Our main result is the following.

\begin{theorem} \label{thm} Let $r \in \mathbb{N}$ and let $\mathsf{C},\mathsf{D} \in \mathrm{NSPD}_{2r}$. Then there exist positive definite functions $\hat{\mathsf{C}},\hat{\mathsf{D}}: \mathbb{F} \to \mathbb{C}$ with $\hat{\mathsf{C}} \rest \mathbb{B}_{2r} = \mathsf{C}$ and $\hat{\mathsf{D}} \rest \mathbb{B}_{2r} = \mathsf{D}$ and $\mathfrak{e}(\mathsf{C},\mathsf{D}) = \mathfrak{e}(\hat{\mathsf{C}},\hat{\mathsf{D}})$. \end{theorem}

The existence of finitely supported positive definite functions on $\mathbb{Z}$ converging to the constant function $1$ from below gives a simple proof that any pair of positive definite functions defined on the same interval $\mathbb{Z}$ can be extended to all of $\mathbb{Z}$ with a finite increase in relative energy. We do not know whether there is a proof other than ours for the direct analog of Theorem \ref{thm} in the case of $\mathbb{Z}$ (i.e. with no increase in relative energy). We also do not know of a proof other than ours for the statement that two positive definite functions defined on the same finite balls in $\mathbb{F}$ can be extended to all of $\mathbb{F}$ with a finite increase in relative energy.

\subsection{Notation}

We define an ordering $\preceq$ on the sphere of radius $1$ in $\mathbb{F}$ by setting $a \preceq b \preceq a^{-1} \preceq b^{-1}$. From this we obtain a corresponding shortlex linear ordering on all of $\mathbb{F}$, which we continue to denote by $\preceq$. For $g \in \mathbb{F}$ define $\mathcal{I}_g = \bigcup \{\{h,h^{-1}\}: h \preceq g\}$. Define a generalized Cayley graph $\mathrm{Cay}(\mathbb{F},g)$ with vertex set equal to $\mathbb{F}$ by placing an edge between distinct elements $h$ and $\ell$ if and only if $\ell^{-1}h \in \mathcal{I}_g$. Write $g_\uparrow$ for the immediate predecessor of $g$ in $\preceq$ and $g_\downarrow$ for the immediate successor of $g$ in $\preceq$. \\
\\
We write $\mathbb{D}$ for the open unit disk in the complex plane. If $n \in \mathbb{N}$ we write $[n]$ for $\{1,\ldots,n\}$.

\subsection{Acknowledgements}

We thank Lewis Bowen and Rostyslav Kravchenko along with the anonymous referee for helpful comments. 

\section{Review of certain aspects of classical theory} \label{sec.classical}

In Section \ref{sec.classical} we recall some information about classical harmonic analysis which will be relevant to our later arguments. Classical harmonic analysis typically begins with Fourier analysis and the theory of Fourier series on the unit circle $\mathbb{T}$ can be thought of as harmonic analysis on the additive group of integers $\mathbb{Z}$, which from our perspective is the free group of rank one.

\subsection{The fundamental inequality on the integers}

We define a marked unitary representation of a countable discrete group $G$ to be a unitary representation $\rho$ of $G$ on a Hilbert space $\mathscr{X}$ together with a distinguished vector $x \in \mathscr{X}$ which is cyclic in the sense that the span of the set $\{\rho(g)x:g \in G\}$ is dense in $\mathscr{X}$. Any unitary representation of a countable discrete group $G$ can be decomposed into a countable orthogonal sum of subrepresentations, each of which admits a cyclic vector.\\
\\
The family of marked unitary representations of $\mathbb{Z}$ is in canonical one to one correspondence with the the family of finite Borel measures on $\mathbb{T}$. This correspondence is given by placing both of these families of objects in canonical one to one correspondence with a third family of objects. This third family consists of so-called positive definite functions on $\mathbb{Z}$. A positive definite function $C$ on $\mathbb{Z}$ is a function from $\mathbb{Z}$ to the complex numbers which satisfies the fundamental inequality \begin{equation} \sum_{m,n =-N}^N \alpha(m)\ov{\alpha(n)}C(m-n) \geq 0 \label{eq.fundamental-0} \end{equation} for every natural number $N$ and every function $\alpha:\{-N,\ldots N\} \to \mathbb{C}$. One can think of a positive definite function on $\mathbb{Z}$ as an infinite version of a positive definite Toeplitz matrix.

\subsection{Fourier correspondence}

The correspondence between a positive definite function $C$ on $\mathbb{Z}$ and a finite Borel measure $\mu$ on $\mathbb{T}$ is given by the formula \begin{equation} \label{eq.fourier} C(n) = \hat{\mu}(n) = \int_\mathbb{T} s^{-n}\deee \mu(s) \end{equation} Thus the values of $C$ are the Fourier coefficients of $\mu$. The fact that the sequence of Fourier coefficients of a measure is positive definite reflects the following fact, which can be verified with elementary computation. \begin{align*} \sum_{m,n=-N}^N\alpha(m)\ov{\alpha(n)} C(m-n) &= \sum_{m,n=-N}^N\alpha(m)\ov{\alpha(n)}\int_\mathbb{T} s^{n-m} \deee\mu(s) \\ & = \int_\mathbb{T} \left \vert \sum_{m=-N}^N \alpha(m)s^{-m} \right \vert^2 \deee \mu(s) \geq 0 \end{align*} The fact that a positive definite function on $\mathbb{Z}$ uniquely defines a finite Borel measure on $\mathbb{T}$ via (\ref{eq.fourier}) is known as B\^{o}chner's theorem.

\subsection{Spectral correspondence}

The correspondence between a positive definite function $C$ on $\mathbb{Z}$ and a marked unitary representation $(\rho,x)$ of $\mathbb{Z}$ is given as follows. Let $u = \rho(1)$ be the unitary operator corresponding to the unique free generator of $\mathbb{Z}$. Then we set \begin{equation} \label{eq.matrixc} C(n) = \langle u^nx, x \rangle \end{equation} Thus the values of $C$ are the matrix coefficients of the marked unitary representation. The fact that a sequence of unitary matrix coefficients is positive definite reflects the following fact, which can be verified with elementary computation. \begin{align*} \sum_{m,n =-N}^N \alpha(m)\ov{\alpha(n)}C(m-n) &= \sum_{m,n=-N}^N \alpha(m)\ov{\alpha(n)} \langle u^{m-n}x, x \rangle \\ & = \nml \sum_{m=-N}^N \alpha(m)u^mx \nmr^2 \geq 0 \end{align*} The fact that a positive definite function on $\mathbb{Z}$ uniquely defines a marked unitary representation of $\mathbb{Z}$ via (\ref{eq.matrixc}) is a version of the spectral theorem.

\subsection{Constructing positive definite functions on the integers} \label{subsec.integers}

In Subsection \ref{subsec.integers} we describe a method for recursively constructing positive definite functions on the integers. A noncommutative version of this construction is the main topic of Section \ref{subsec.aoc}\\
\\
Note that if $m,n \in \{0,\ldots,N\}$ then we have $m-n \in \{-N,\ldots,N\}$. Suppose we have defined a function $C:\{-N,\ldots,N\} \to \mathbb{C}$ which satisfies the restricted fundamental inequality \begin{equation} \sum_{m,n=0}^N \alpha(m)\ov{\alpha(n)}C(m-n) \geq 0 \label{eq.fundamental-0.2} \end{equation} for every function $\alpha:\{0,\ldots,N\} \to \mathbb{C}$. We refer to $C$ as a partially defined positive definite function. We wish to extend $C$ to a partially defined positive definite function defined on $\{-N-1,\ldots,N+1\}$. In order to do so it suffices to specify the number $C(N+1)$, as then we must have $C(-N-1) = \ov{C(N+1)}$.\\
\\
We will assume that $C$ is strictly positive definite in the sense that the inequality in (\ref{eq.fundamental-0.2}) is saturated only when $\alpha$ is identically zero. Standard theory then implies that we can find a Hilbert space $\mathscr{X}$ of dimension $N+1$ and canonical basis vectors $\Phi_0,\ldots,\Phi_N \in \mathscr{X}$ such that \begin{equation} \label{eq.waiting-4} \langle \Phi_m,\Phi_n \rangle = C(m-n) \end{equation} for all $m,n \in \{0,\ldots,N\}$. We can also define a shifted Hilbert space $\mathscr{Y}$ of dimension $N+1$ with canonical basis vectors $\Phi_1,\ldots,\Phi_{N+1}$ satisfying (\ref{eq.waiting-4}) for all $m,n \in \{1,\ldots,N+1\}$. Let $\mathscr{Z}$ be the vector space consisting of the span of $\mathscr{X}$ and $\mathscr{Y}$, so that $\mathscr{Z}$ has a canonical basis $\Phi_0,\ldots,\Phi_{N+1}$.\\
\\
In the vector space $\mathscr{Z}$ the inner product between $\Phi_0$ and $\Phi_{N+1}$ is not defined. However, the inner products between all other pairs of elements of the canonical basis for $\mathscr{Z}$ are defined. This allows us to apply the Gram-Schmidt procedure to $\Phi_1,\ldots,\Phi_N$ to obtain an orthonormal basis $B$ for the subspace \begin{equation} \label{eq.subspace} \mathscr{X}  \cap  \mathscr{Y} = \mathrm{span}(\Phi_1,\ldots,\Phi_N) \end{equation} of $\mathscr{Z}$. Moreover, we can compute the inner products between $\Phi_0$ and the elements of $B$ and we can compute the inner products between $\Phi_{N+1}$ and the elements of $B$. Therefore the orthogonal projection $p$ from $\mathscr{Z}$ onto the subspace in (\ref{eq.subspace}) is well defined.\\
\\
Write $I$ for the identity operator on $\mathscr{Z}$. Then for any complex number $\zeta$ with $|\zeta| \leq 1$ we can set \begin{equation} \label{eq.waiting} \left \langle \frac{(I-p)\Phi_{N+1}}{||(I-p)\Phi_{N+1}||},\frac{(I-p)\Phi_0}{||(I-p)\Phi_0||} \right \rangle = \zeta \end{equation} The hypothesis that $C$ is strictly positive definite ensures the denominators of the fractions in (\ref{eq.waiting}) are nonzero. The hypothesis that $|\zeta| \leq 1$ reflects to the need to satisfy the Cauchy-Schwartz inequality. Once we have chosen $\zeta$, the space $\mathscr{Z}$ is promoted to a full Hilbert space. The fact that any choice of $\zeta$ with $|\zeta| \leq 1$ produces a valid positive definite extension follows from the observation that no matter the value of $\zeta$ we have a decomposition \begin{equation} \label{eq.oplus} \mathscr{Z} = (\mathscr{X} \cap \mathscr{Y}) \oplus \mathrm{span}\bigl((I-p)\Phi_{N+1},(I-p)\Phi_0\bigr) \end{equation} From (\ref{eq.waiting}) we can recover \begin{align} C(N+1) & = \langle \Phi_{N+1},\Phi_0\rangle \nonumber \\ & = \zeta ||(I-p)\Phi_{N+1}||\,||(I-p)\Phi_0|| + \langle p \Phi_{N+1},\Phi_0\rangle \nonumber \\ &\quad \quad + \langle \Phi_{N+1},p\Phi_0 \rangle - \langle p\Phi_{N+1},p\Phi_0\rangle \nonumber \\ & = \zeta ||(I-p)\Phi_{N+1}||\,||(I-p)\Phi_0|| + \langle p \Phi_{N+1},p\Phi_0 \rangle  \label{eq.waiting-2} \end{align} The norms and inner products in (\ref{eq.waiting-2}) are determined by $C$, so $\zeta$ is indeed the only free parameter. Thus the set of legal possibilities for $C(N+1)$ is a closed disk in $\mathbb{C}$ of radius $||(I-p)\Phi_{N+1}||\,||(I-p)\Phi_0||$ centered at the point $\langle p \Phi_{N+1},p\Phi_0 \rangle$. The numbers $||(I-p) \Psi_0||$ and  $||(I-p) \Phi_{N+1}||$ are bounded above by $||\Phi_0|| = ||\Phi_{N+1}|| = C(0)$. Moreover, these numbers will be small if the numbers $||p  \Phi_0||$ and $||p  \Phi_{N+1}||$ are close to $C(0)$. Since $p$ is the orthogonal projection onto $\mathscr{X} \cap \mathscr{Y} $, we can interpret this as indicating that if $\Phi_0$ and $\Phi_{N+1}$ are close to $\mathscr{X} \cap \mathscr{Y}$ then the possibilities for $C(N+1)$ are more restricted. The number $\zeta$ is typically referred to as a Szeg\"{o} parameter.

\section{Constructing positive definite functions on the free group} \label{subsec.aoc}

In Section \ref{subsec.aoc} we describe the general procedure for constructing positive definite functions on the free group.

\subsection{Geometry of generalized Cayley graphs on the free group} \label{seg.geom}

In Subsection \ref{seg.geom} we describe certain geometric features of the graphs $\mathrm{Cay}(\mathbb{F},g)$ which will be necessary for the construction of positive definite functions. We will use two known results about generalized Cayley graphs on $\mathbb{F}$. \\
\\
Recall that a graph is said to be chordal if every induced cycle has length at most $3$. The next fact appears as Proposition 3.2 in \cite{MR2316876} and as Proposition 3.6.7 in \cite{MR2807419}.

\begin{proposition}[Bakonyi, Timotin] \label{prop.chord} Let $g \in \mathbb{F}$. Then $\mathrm{Cay}(\mathbb{F},g)$ is chordal. \end{proposition}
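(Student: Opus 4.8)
The plan is to argue by contradiction, producing a chord by passing to the Cayley tree of $\mathbb{F}$. By the definition of chordality we must show $\mathrm{Cay}(\mathbb{F},g)$ has no induced cycle of length $\ge 4$, so suppose $v_0,v_1,\dots,v_{p-1},v_0$ is an induced (chordless) cycle with $p\ge4$, indices read modulo $p$. Since $\mathcal{I}_g$ is symmetric, $\mathrm{Cay}(\mathbb{F},g)$ is vertex transitive — left translation by any group element is an automorphism — so after translating we may assume some $v_j$ equals $e$. Let $T\subseteq\mathbb{F}$ be the convex hull of $\{v_0,\dots,v_{p-1}\}$ in the Cayley tree, i.e.\ the union of the geodesic segments between pairs of cycle vertices; then $T$ is a finite subtree containing $e$, and each leaf of $T$ is one of the $v_j$, because any point of $T$ that is not a cycle vertex lies in the interior of some $[v_a,v_b]$ and hence has two neighbours in $T$.

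Root $T$ at $e$, so that $\mathrm{dep}(\cdot)$ — distance from $e$ in the tree — is just word length, and put $n=|g|$. Pick a cycle vertex $v_i$ of maximal depth $D$; it has no children in $T$, hence is a leaf of $T$, and the geodesics from $v_i$ to every other cycle vertex all begin with the unique $T$-edge at $v_i$. Let $w$ and $w'$ be the meets (deepest common ancestors) of $\{v_i,v_{i-1}\}$ and $\{v_i,v_{i+1}\}$ respectively; both lie on $[e,v_i]$, so they are comparable, and after relabelling $v_{i-1}\leftrightarrow v_{i+1}$ if necessary we may assume $\mathrm{dep}(w)\le\mathrm{dep}(w')$. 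Tracing the geodesics then shows that the meet of $\{v_{i-1},v_{i+1}\}$ is $w$, and that the reduced word for $v_{i+1}^{-1}v_{i-1}$ is the cancellation-free concatenation of the ascent $v_{i+1}\to w$ with the descent $w\to v_{i-1}$; hence $|v_{i+1}^{-1}v_{i-1}|=\mathrm{dep}(v_{i-1})+\mathrm{dep}(v_{i+1})-2\,\mathrm{dep}(w) = |v_i^{-1}v_{i-1}|-\bigl(D-\mathrm{dep}(v_{i+1})\bigr)\le|v_i^{-1}v_{i-1}|\le n$, where the last two inequalities use that $v_i$ is deepest and that $v_i\sim v_{i-1}$ forces $v_i^{-1}v_{i-1}\in\mathcal{I}_g\subseteq\mathbb{B}_n$. (Degenerate configurations — $w=w'$, or $v_{i\pm1}$ an ancestor of $v_i$, or $v_{i-1}$ and $v_{i+1}$ sharing a branch below $w$ — only shorten $v_{i+1}^{-1}v_{i-1}$, so the bound still holds.)

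If $|v_{i+1}^{-1}v_{i-1}|\le n-1$ then $v_{i+1}^{-1}v_{i-1}\in\mathbb{B}_{n-1}\subseteq\mathcal{I}_g$, since every element of length $<n$ precedes $g$ in shortlex; so $v_{i-1}\sim v_{i+1}$, which (as $p\ge4$) is a genuine chord, a contradiction. The remaining case $|v_{i+1}^{-1}v_{i-1}|=n$ is, I expect, the main obstacle. Here the chain above is tight: $\mathrm{dep}(v_{i+1})=D$ and $|v_i^{-1}v_{i-1}|=n$. Expanding $v_i$ along its longest common prefixes with $v_{i-1}$ and $v_{i+1}$ yields reduced factorizations $v_i^{-1}v_{i-1}=R^{-1}MN$, $\,v_{i+1}^{-1}v_{i-1}=S^{-1}MN$ and $\,v_i^{-1}v_{i+1}=R^{-1}S$, in which $R,S$ are the descents $w'\to v_i$ and $w'\to v_{i+1}$ (of equal length, with distinct first letters), $M$ is the ascent $w'\to w$, and $N$ is the descent $w\to v_{i-1}$. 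Thus $v_i^{-1}v_{i-1}$ and $v_{i+1}^{-1}v_{i-1}$ both have length $n$, agree on the common suffix $MN$ — of length $\ge n/2$, since $v_i\sim v_{i+1}$ gives $2|R|=|v_i^{-1}v_{i+1}|\le n$ — and differ only on equal-length prefixes. The idea is now to invoke the exact form of $\mathcal{I}_g$ on the sphere of radius $n$, namely $\{h:|h|=n,\ h\preceq g\}\cup\{h:|h|=n,\ h^{-1}\preceq g\}$, which is the symmetrization of a lexicographic initial segment, and to exploit the remaining freedom (the choice of root, and of which deepest vertex to take) to force the lexicographic comparison of $R$ and $S$ into the direction that transports $v_i^{-1}v_{i-1}\in\mathcal{I}_g$ to $v_{i+1}^{-1}v_{i-1}\in\mathcal{I}_g$ — again yielding the chord $v_{i-1}\sim v_{i+1}$ and the contradiction.

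A different route would be to induct along the well-order $\preceq$: since $\mathcal{I}_g=\mathcal{I}_{g_\uparrow}\cup\{g,g^{-1}\}$, the graph $\mathrm{Cay}(\mathbb{F},g)$ is obtained from the (inductively chordal) graph $\mathrm{Cay}(\mathbb{F},g_\uparrow)$ by adjoining the edges $\{h,hg\}$, $h\in\mathbb{F}$, which form pairwise disjoint bi-infinite paths running along the left cosets of $\langle g\rangle$; one then checks that these new edges, together with the old graph, cannot produce an induced cycle of length $\ge4$, the key point being that $|g^k|>|g|$ for all $k\ge2$. In either approach the substantive difficulty is the tie-breaking analysis in the case where the word lengths involved equal $|g|$ exactly.
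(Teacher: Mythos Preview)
The paper does not give its own proof of this proposition; it is quoted as a known result of Bakonyi and Timotin and cited to \cite{MR2316876} and \cite{MR2807419}. So there is no ``paper's proof'' to compare against, and your attempt must stand on its own.

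Your tree-geometric strategy is the natural one and the subcritical part is correct: rooting at a cycle vertex, choosing a $\preceq$-deepest $v_i$, and comparing the meets $w,w'$ of $v_i$ with its two cycle-neighbours does give $|v_{i+1}^{-1}v_{i-1}|\le |v_i^{-1}v_{i-1}|\le n$, and hence a chord whenever the inequality is strict, since $\mathbb{B}_{n-1}\subseteq\mathcal{I}_g$. The degeneracy remarks are also fine.

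The genuine gap is exactly where you flag it: the boundary case $|v_{i+1}^{-1}v_{i-1}|=n$. Here you reduce to the factorisations $v_i^{-1}v_{i-1}=R^{-1}MN$, $v_{i+1}^{-1}v_{i-1}=S^{-1}MN$, $v_i^{-1}v_{i+1}=R^{-1}S$ with $|R|=|S|$, and then appeal to ``the remaining freedom (the choice of root, and of which deepest vertex to take)'' to force a favourable shortlex comparison. This is not a proof. Concretely: membership of $R^{-1}MN$ in $\mathcal{I}_g$ says $R^{-1}MN\preceq g$ or $N^{-1}M^{-1}R\preceq g$; to conclude $S^{-1}MN\in\mathcal{I}_g$ you would need either $S^{-1}\preceq R^{-1}$ in lex (comparing last letters of $R,S$) or $S\preceq R$ in lex (comparing first letters of $R,S$), and nothing you have set up controls both of these simultaneously. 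Re-rooting changes the depths but not the words $v_j^{-1}v_k$, and swapping to another deepest vertex runs into the same dichotomy one step over. You need an actual argument here --- for instance, an iteration showing that if the critical equality persists then \emph{every} $v_j$ is deepest, contradicting $e\in\{v_j\}$ --- or the case analysis on the shortlex order carried out in full. The inductive route via $\mathcal{I}_g=\mathcal{I}_{g_\uparrow}\cup\{g,g^{-1}\}$ that you sketch is promising (it is closer in spirit to how the literature organises the argument), but as written it too stops short of the real work: you have to show that adjoining the bi-infinite $g$-paths to a chordal graph creates no induced $\ge 4$-cycle, and that again comes down to the same length-$n$ tie-breaking.
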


Let $g \in \mathbb{F}$ and suppose $\mathcal{K}$ is a clique in $\mathrm{Cay}(\mathbb{F},g)$ which is not a clique in $\mathrm{Cay}(\mathbb{F},g_\uparrow)$. We note that $\mathcal{K}$ is maximal among all cliques in $\mathrm{Cay}(\mathbb{F},g)$ if and only if $\mathcal{K}$ is maximal among those cliques in $\mathrm{Cay}(\mathbb{F},g)$ which are not cliques in $\mathrm{Cay}(\mathbb{F},g_\uparrow)$. The following appears as Corollary 3.3 in \cite{MR2316876} and as Corollary 3.6.8 in \cite{MR2807419}.

\begin{proposition}[Bakonyi, Timotin] \label{prop.clique} Let $g \in \mathbb{F}$ and let $\mathcal{K}$ be a maximal clique in $\mathrm{Cay}(\mathbb{F},g)$ which is not a clique in $\mathrm{Cay}(\mathbb{F},g_\uparrow)$. Then there exists a unique edge in $\mathcal{K}$ which is not an edge in $\mathrm{Cay}(\mathbb{F},g_\uparrow)$. \end{proposition}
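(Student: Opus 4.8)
The plan is to derive the proposition from chordality of generalized Cayley graphs, i.e.\ Proposition \ref{prop.chord}, applied to $g_\uparrow$. Call an edge of $\mathrm{Cay}(\mathbb{F},g)$ \emph{new} if it is not an edge of $\mathrm{Cay}(\mathbb{F},g_\uparrow)$. Since $\mathcal{I}_g=\mathcal{I}_{g_\uparrow}\cup\{g,g^{-1}\}$, every new edge is a pair $\{h,\ell\}$ of distinct elements with $\ell^{-1}h\in\{g,g^{-1}\}$, and note $g\neq e$ (otherwise $g_\uparrow$ would be undefined). The hypothesis that $\mathcal{K}$ is a clique of $\mathrm{Cay}(\mathbb{F},g)$ but not of $\mathrm{Cay}(\mathbb{F},g_\uparrow)$ immediately yields a pair in $\mathcal{K}$ that is a new edge, so the existence assertion is free and the content is uniqueness. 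I would in fact prove uniqueness for an arbitrary clique of $\mathrm{Cay}(\mathbb{F},g)$, so maximality of $\mathcal{K}$ will not be used.

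The first step is to show that two distinct new edges of $\mathcal{K}$ cannot share a vertex. If $\{p,q\}$ and $\{q,r\}$ are new edges with $p\neq r$, then $p^{-1}q,\,q^{-1}r\in\{g,g^{-1}\}$, whence $p^{-1}r=(p^{-1}q)(q^{-1}r)\in\{g^2,\,e,\,g^{-2}\}$, and $p^{-1}r\neq e$ because $p\neq r$. On the other hand $p,r\in\mathcal{K}$ are adjacent in $\mathrm{Cay}(\mathbb{F},g)$, so $p^{-1}r\in\mathcal{I}_g$ and hence $|p^{-1}r|\le|g|$. This contradicts the elementary free-group identity $|g^2|>|g|$ for $g\neq e$: writing $g=wcw^{-1}$ with $c$ cyclically reduced and nonempty one has $g^2=wc^2w^{-1}$ with $|g^2|=|g|+|c|$. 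Hence any two distinct new edges of $\mathcal{K}$ are vertex-disjoint.

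Now suppose for contradiction that $\mathcal{K}$ contains two new edges $\{p,q\}$ and $\{p',q'\}$. By the previous step the vertices $p,q,p',q'$ are pairwise distinct. Since $\mathcal{K}$ is a clique of $\mathrm{Cay}(\mathbb{F},g)$, each of the four pairs $\{p,p'\},\{p',q\},\{q,q'\},\{q',p\}$ is an edge of $\mathrm{Cay}(\mathbb{F},g)$, and none of them is new, because a new edge among them would share a vertex with $\{p,q\}$ or with $\{p',q'\}$, which the previous step forbids. Consequently all four of these pairs are edges of $\mathrm{Cay}(\mathbb{F},g_\uparrow)$, so $p - p' - q - q' - p$ is a $4$-cycle in $\mathrm{Cay}(\mathbb{F},g_\uparrow)$. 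The only possible chords of this cycle are $\{p,q\}$ and $\{p',q'\}$, and neither of these is an edge of $\mathrm{Cay}(\mathbb{F},g_\uparrow)$ since both are new. Thus $\mathrm{Cay}(\mathbb{F},g_\uparrow)$ contains an induced $4$-cycle, contradicting Proposition \ref{prop.chord}. Therefore $\mathcal{K}$ contains exactly one new edge, as claimed.

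I do not expect a genuine obstacle here once the reduction is found; the crux is the observation in the last paragraph that a pair of vertex-disjoint new edges lying in a common clique of $\mathrm{Cay}(\mathbb{F},g)$ necessarily produces an induced $4$-cycle in $\mathrm{Cay}(\mathbb{F},g_\uparrow)$, so that Proposition \ref{prop.chord} applies. The one computational input is the inequality $|g^2|>|g|$, which is precisely what rules out two new edges sharing a vertex and so guarantees the $4$-cycle is genuine rather than degenerate.
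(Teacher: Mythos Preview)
Your argument is correct. The paper does not actually prove this proposition; it is quoted as a known result of Bakonyi and Timotin (Corollary 3.3 in \cite{MR2316876}, Corollary 3.6.8 in \cite{MR2807419}), so there is no in-paper proof to compare against. Your self-contained derivation from Proposition~\ref{prop.chord} is clean: the free-group fact $|g^2|>|g|$ rules out adjacent new edges, and then two vertex-disjoint new edges in a common clique force an induced $4$-cycle in $\mathrm{Cay}(\mathbb{F},g_\uparrow)$, contradicting chordality. You are also right that maximality of $\mathcal{K}$ is not needed for uniqueness.
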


The next two propositions are implicit in \cite{MR2316876} but we include proofs for completeness.

\begin{proposition} \label{prop.max} Let $g \in \mathbb{F}$. Then there exists a unique maximal clique $\mathcal{K}_g$ in $\mathrm{Cay}(\mathbb{F},g)$ which contains the edge $(e,g)$.\end{proposition}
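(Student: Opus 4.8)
The plan is to establish existence and uniqueness separately, both using the chordality of $\mathrm{Cay}(\mathbb{F},g)$ from Proposition \ref{prop.chord}. First I would note that the edge $(e,g)$ is indeed present in $\mathrm{Cay}(\mathbb{F},g)$, since $g^{-1}e = g^{-1}$ lies in $\mathcal{I}_g$ by definition (as $g$ is the $\preceq$-largest element with $g \preceq g$, and $\mathcal{I}_g$ collects $\{h,h^{-1}\}$ for all $h \preceq g$). Existence of \emph{some} maximal clique containing $(e,g)$ is then automatic in any finite-degree graph: start with the two-vertex clique $\{e,g\}$ and greedily add common neighbors until no more can be added; since vertices $h$ adjacent to both $e$ and $g$ satisfy $h \in \mathcal{I}_g$ and $g^{-1}h \in \mathcal{I}_g$, there are only finitely many, so the process terminates in a maximal clique.

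For uniqueness, the key observation is that if $\mathcal{K}_1$ and $\mathcal{K}_2$ are two maximal cliques both containing $\{e,g\}$, then I would argue that $\mathcal{K}_1 \cup \mathcal{K}_2$ is itself a clique, which by maximality forces $\mathcal{K}_1 = \mathcal{K}_2$. To see that $\mathcal{K}_1 \cup \mathcal{K}_2$ is a clique, take $x \in \mathcal{K}_1 \setminus \mathcal{K}_2$ and $y \in \mathcal{K}_2 \setminus \mathcal{K}_1$; I need to show $x$ and $y$ are adjacent. Suppose not. Since $x$ is adjacent to both $e$ and $g$, and $y$ is adjacent to both $e$ and $g$, the four vertices $x, e, y, g$ form a $4$-cycle $x - e - y - g - x$ (using non-adjacency of $x,y$ and, if needed, treating the case $x$ adjacent to $g$ versus going through $e$; the point is $e$ and $g$ are the two "connectors"). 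This cycle is induced unless $x \sim y$ or $e \sim g$; we have $e \sim g$, so in fact $x-e-y-g-x$ plus the chord $(e,g)$ splits into triangles $\{x,e,g\}$ and $\{y,e,g\}$ — so chordality does not immediately contradict non-adjacency of $x,y$. Thus the naive argument needs refinement: I would instead use Proposition \ref{prop.clique}, or rather a cleaner route via the shortlex structure.

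The cleaner route, and the one I expect to be the actual heart of the argument, is to induct on $g$ in the $\preceq$-ordering, tracking how maximal cliques evolve as we pass from $\mathrm{Cay}(\mathbb{F},g_\uparrow)$ to $\mathrm{Cay}(\mathbb{F},g)$. The edge $(e,g)$ is new at stage $g$ (it is not an edge of $\mathrm{Cay}(\mathbb{F},g_\uparrow)$, since $g^{-1} \notin \mathcal{I}_{g_\uparrow}$). By the remark preceding Proposition \ref{prop.clique}, any maximal clique of $\mathrm{Cay}(\mathbb{F},g)$ containing $(e,g)$ is a maximal clique among those not present in $\mathrm{Cay}(\mathbb{F},g_\uparrow)$, and by Proposition \ref{prop.clique} such a clique contains exactly one new edge — which must therefore be $(e,g)$ itself. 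So I must show there is a unique maximal clique whose unique new edge is $(e,g)$. Any such clique $\mathcal{K}$ has the property that $\mathcal{K} \setminus \{g\}$ and $\mathcal{K}\setminus\{e\}$ are each cliques in $\mathrm{Cay}(\mathbb{F},g_\uparrow)$ containing a common vertex, and every vertex of $\mathcal{K}\setminus\{e,g\}$ is $\mathrm{Cay}(\mathbb{F},g_\uparrow)$-adjacent to both $e$ and $g$. Hence $\mathcal{K} = \{e,g\} \cup S$, where $S$ is precisely the set of common neighbors of $e$ and $g$ in $\mathrm{Cay}(\mathbb{F},g_\uparrow)$ that are pairwise adjacent; to finish I would show this common-neighbor set $S$ is \emph{already} a clique in $\mathrm{Cay}(\mathbb{F},g_\uparrow)$ — this is where chordality bites: if $s, s' \in S$ were non-adjacent, then $s - e - s' - g - s$ would be an induced $4$-cycle in $\mathrm{Cay}(\mathbb{F},g_\uparrow)$ (now $e \not\sim g$ at stage $g_\uparrow$, so there is no chord), contradicting Proposition \ref{prop.chord}. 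Therefore $S$ is forced, $\mathcal{K} = \{e,g\}\cup S$ is uniquely determined, and we set $\mathcal{K}_g := \{e,g\} \cup S$. The main obstacle is getting the bookkeeping of "new edges versus old edges" exactly right so that the induced $4$-cycle genuinely lives in $\mathrm{Cay}(\mathbb{F},g_\uparrow)$ with no chord; once that is pinned down, chordality delivers uniqueness cleanly.
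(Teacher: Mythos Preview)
Your proposal is correct and follows essentially the same approach as the paper. After your initial false start (where you correctly recognize that the chord $(e,g)$ spoils the $4$-cycle argument at stage $g$), your refined argument is exactly the paper's: use Proposition~\ref{prop.clique} to see that $(e,g)$ is the unique new edge in any such maximal clique, so every other vertex is a common neighbor of $e$ and $g$ already in $\mathrm{Cay}(\mathbb{F},g_\uparrow)$; then observe that two such common neighbors $s,s'$ that were non-adjacent would give the induced $4$-cycle $s\text{--}e\text{--}s'\text{--}g\text{--}s$ in $\mathrm{Cay}(\mathbb{F},g_\uparrow)$ (where now $e\not\sim g$), contradicting chordality. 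The only cosmetic difference is that you phrase it constructively (exhibiting $\mathcal{K}_g=\{e,g\}\cup S$) while the paper argues by contradiction from two hypothetical maximal cliques, but the decisive step---pushing the $4$-cycle down to stage $g_\uparrow$ so that $(e,g)$ is no longer a chord---is identical.
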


\begin{proof}[Proof of Proposition \ref{prop.max}] Suppose toward a contradiction that Proposition \ref{prop.max} fails. Let $\mathcal{M}_1$ and $\mathcal{M}_2$ be two distinct maximal cliques in $\mathrm{Cay}(\mathbb{F},g)$ which contain $(e,g)$. Since $\mathcal{M}_1$ and $\mathcal{M}_2$ contain $(e,g)$, they are not cliques in $\mathrm{Cay}(\mathbb{F},g_\uparrow)$. Therefore Proposition \ref{prop.clique} implies that for each $j \in \{1,2\}$ the edge $(e,g)$ is the unique edge in $\mathcal{M}_j$ which is not an edge in $\mathrm{Cay}(\mathbb{F},g_\uparrow)$. Thus for each $j \in \{1,2\}$ and every $m \in \mathcal{M}_j \setminus \{g,e\}$ we have that $(m,g)$ and $(m,e)$ are edges of $\mathrm{Cay}(\mathbb{F},g_\uparrow)$. \\
\\
Since $\mathcal{M}_1$ and $\mathcal{M}_2$ are distinct and maximal, it must be the case that their union is not a clique in $\mathrm{Cay}(\mathbb{F},g)$. Therefore we can choose $m_1 \in \mathcal{M}_1 \setminus \mathcal{M}_2$ and $m_2$ in $\mathcal{M}_2 \setminus \mathcal{M}_1$ such that $(m_1,m_2)$ is not an edge in $\mathrm{Cay}(\mathbb{F},g)$. Since $\{g,e\} \subseteq \mathcal{M}_1 \cap \mathcal{M}_2$ for each $j \in \{1,2\}$ we have $m_j \notin \{g,e\}$. Again using the fact that $(g,e)$ is the unique edge added to $\mathcal{M}_1$ and $\mathcal{M}_2$ when passing from $\mathrm{Cay}(\mathbb{F},g_\uparrow)$ to $\mathrm{Cay}(\mathbb{F},g)$ we see that $(m_1,m_2)$ is not an edge in $\mathrm{Cay}(\mathbb{F},g_\uparrow)$.\\
\\
Consider the path $e \to m_1 \to g \to m_2 \to e$. The previous paragraphs show that this is an induced cycle in $\mathrm{Cay}(\mathbb{F},g_\uparrow)$, so we obtain a contradiction to Proposition \ref{prop.chord}. \end{proof}

\begin{proposition} \label{prop.translate} Suppose $\mathcal{M}$ is a maximal clique in $\mathrm{Cay}(\mathbb{F},g)$ which is not a clique in $\mathrm{Cay}(\mathbb{F},g_\uparrow)$. Then $\mathcal{M}$ is a translate of $\mathcal{K}_g$. \end{proposition}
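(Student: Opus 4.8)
The plan is to reduce the statement to Proposition \ref{prop.max} by translating $\mathcal{M}$ so that one of its edges becomes the edge $(e,g)$. First I would invoke Proposition \ref{prop.clique}: since $\mathcal{M}$ is a maximal clique in $\mathrm{Cay}(\mathbb{F},g)$ which is not a clique in $\mathrm{Cay}(\mathbb{F},g_\uparrow)$, there is a unique edge $(h,\ell)$ in $\mathcal{M}$ which is not an edge in $\mathrm{Cay}(\mathbb{F},g_\uparrow)$. By definition of the generalized Cayley graph, $(h,\ell)$ being an edge of $\mathrm{Cay}(\mathbb{F},g)$ but not of $\mathrm{Cay}(\mathbb{F},g_\uparrow)$ means precisely that $\ell^{-1}h \in \mathcal{I}_g \setminus \mathcal{I}_{g_\uparrow} = \{g, g^{-1}\}$. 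After possibly swapping the roles of $h$ and $\ell$, we may assume $\ell^{-1}h = g$, i.e. $h = \ell g$.

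Next I would apply the left translation $x \mapsto \ell^{-1} x$. Left translation by any fixed element of $\mathbb{F}$ is a graph automorphism of $\mathrm{Cay}(\mathbb{F},g)$ (and of $\mathrm{Cay}(\mathbb{F},g_\uparrow)$), since the edge relation $\ell^{-1}h \in \mathcal{I}_g$ is left-invariant. Hence $\ell^{-1}\mathcal{M}$ is again a maximal clique in $\mathrm{Cay}(\mathbb{F},g)$ which is not a clique in $\mathrm{Cay}(\mathbb{F},g_\uparrow)$, and it contains the vertices $\ell^{-1}\ell = e$ and $\ell^{-1}h = g$; thus $(e,g)$ is an edge of $\ell^{-1}\mathcal{M}$. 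By Proposition \ref{prop.max}, $\mathcal{K}_g$ is the unique maximal clique of $\mathrm{Cay}(\mathbb{F},g)$ containing the edge $(e,g)$, so $\ell^{-1}\mathcal{M} = \mathcal{K}_g$, and therefore $\mathcal{M} = \ell \mathcal{K}_g$ is a translate of $\mathcal{K}_g$, as claimed.

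I do not anticipate a serious obstacle here; the argument is essentially bookkeeping with the definitions. The one point that needs a word of care is the identification $\mathcal{I}_g \setminus \mathcal{I}_{g_\uparrow} = \{g, g^{-1}\}$, which follows immediately from the definition $\mathcal{I}_g = \bigcup\{\{h,h^{-1}\} : h \preceq g\}$ together with the fact that $g_\uparrow$ is the immediate predecessor of $g$ in the shortlex order $\preceq$; and the accompanying observation that left translation preserves both generalized Cayley graphs and carries maximal cliques to maximal cliques. With these in hand the reduction to Proposition \ref{prop.max} is immediate.
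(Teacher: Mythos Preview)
Your proposal is correct and follows essentially the same route as the paper: locate an edge $(h,\ell)$ of $\mathcal{M}$ with $\ell^{-1}h \in \{g,g^{-1}\}$, translate by $\ell^{-1}$ so this edge becomes $(e,g)$, and invoke Proposition~\ref{prop.max}. The only minor difference is that you cite Proposition~\ref{prop.clique} for the edge, whereas the paper just uses that $\mathcal{M}$ not being a clique in $\mathrm{Cay}(\mathbb{F},g_\uparrow)$ already yields such a pair $(h,\ell)$; your handling of $\mathcal{I}_g\setminus\mathcal{I}_{g_\uparrow}=\{g,g^{-1}\}$ is in fact slightly more careful than the paper's phrasing.
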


\begin{proof}[Proof of Proposition \ref{prop.translate}] Since $\mathcal{M}$ is not a clique in $\mathrm{Cay}(\mathbb{F},g_\uparrow)$ it must contain a pair of vertexes $h,\ell$ such that $\ell^{-1}h \notin \mathcal{I}_{g_\uparrow}$. Since $g$ is the unique element of $\mathcal{I}_g \setminus \mathcal{I}_{g_\uparrow}$ and $\mathcal{M}$ is a clique in $\mathrm{Cay}(\mathbb{F},g)$ this implies that $\ell^{-1}h = g$. Thus by translating we may assume that $h=g$ and $\ell = e$ and so Proposition \ref{prop.translate} follows from Proposition \ref{prop.max}. \end{proof}

The next proposition is necessary because it may happen that $g_\uparrow \notin \mathcal{K}_g$.

\begin{proposition} \label{prop.weird} Let $g \in \mathbb{F}$. Then there exists $h \in \mathcal{I}_g$ such that $\mathcal{K}_g \setminus \{g\}$ is contained in a translate of $\mathcal{K}_h$. \end{proposition}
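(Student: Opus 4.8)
The plan is to recognize $\mathcal{K}_g \setminus \{g\}$ as a clique that is already present at the earlier stage $g_\uparrow$, enlarge it to a maximal clique of $\mathrm{Cay}(\mathbb{F},g_\uparrow)$, and then identify that maximal clique as a translate of some $\mathcal{K}_h$ via Proposition \ref{prop.translate}. We may assume $g \neq e$, since for $g = e$ the set $\mathcal{K}_g \setminus \{g\}$ is empty and there is nothing to prove.

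First I would check that $\mathcal{K}_g \setminus \{g\}$ is a clique in $\mathrm{Cay}(\mathbb{F},g_\uparrow)$. If $\mathcal{I}_g = \mathcal{I}_{g_\uparrow}$ then $\mathrm{Cay}(\mathbb{F},g) = \mathrm{Cay}(\mathbb{F},g_\uparrow)$ and this is immediate, so suppose not. Then $\mathcal{K}_g$ contains the edge $(e,g)$, which lies in $\mathrm{Cay}(\mathbb{F},g)$ but, because $g \notin \mathcal{I}_{g_\uparrow}$, not in $\mathrm{Cay}(\mathbb{F},g_\uparrow)$; in particular $\mathcal{K}_g$ is a maximal clique of $\mathrm{Cay}(\mathbb{F},g)$ that is not a clique of $\mathrm{Cay}(\mathbb{F},g_\uparrow)$, so Proposition \ref{prop.clique} forces $(e,g)$ to be the \emph{only} edge of $\mathcal{K}_g$ absent from $\mathrm{Cay}(\mathbb{F},g_\uparrow)$. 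Removing the vertex $g$ destroys exactly that edge, so the remaining vertex set is a clique of $\mathrm{Cay}(\mathbb{F},g_\uparrow)$.

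Next, since cliques of $\mathrm{Cay}(\mathbb{F},g_\uparrow)$ have size at most $|\mathcal{I}_{g_\uparrow}|$ and hence are finite, I may enlarge $\mathcal{K}_g \setminus \{g\}$ to a maximal clique $\mathcal{M}$ of $\mathrm{Cay}(\mathbb{F},g_\uparrow)$. Let $h$ be the $\preceq$-least element of $\mathbb{F}$ such that $\mathcal{M}$ is a clique in $\mathrm{Cay}(\mathbb{F},h)$; this exists because $\preceq$ well-orders $\mathbb{F}$ and $g_\uparrow$ qualifies, and it satisfies $h \preceq g_\uparrow$. The key observation is that $\mathcal{M}$ remains a \emph{maximal} clique in the subgraph $\mathrm{Cay}(\mathbb{F},h) \subseteq \mathrm{Cay}(\mathbb{F},g_\uparrow)$: any clique of $\mathrm{Cay}(\mathbb{F},h)$ properly containing $\mathcal{M}$ would also be a clique of $\mathrm{Cay}(\mathbb{F},g_\uparrow)$, contradicting maximality of $\mathcal{M}$ in the latter. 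If $h \neq e$, then by minimality of $h$ the clique $\mathcal{M}$ is not a clique of $\mathrm{Cay}(\mathbb{F},h_\uparrow)$, so Proposition \ref{prop.translate}, applied with $h$ in the role of $g$, shows $\mathcal{M}$ is a translate of $\mathcal{K}_h$; since $\mathcal{K}_g \setminus \{g\} \subseteq \mathcal{M}$ and $h \preceq g_\uparrow \prec g$ gives $h \in \mathcal{I}_g$, this $h$ works. The remaining case $h = e$ occurs only when $g_\uparrow = e$, forcing $\mathcal{K}_g \setminus \{g\} = \{e\}$, and then one checks directly that $\{e\}$ lies in a translate of some $\mathcal{K}_h$ with $h \in \mathcal{I}_g$.

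I expect the main obstacle to be precisely the identification in the previous paragraph: one cannot simply take $h = g_\uparrow$, because $g_\uparrow$ need not lie in $\mathcal{K}_g$ at all, so $\mathcal{M}$ need not contain the edge $(e,g_\uparrow)$ and need not equal $\mathcal{K}_{g_\uparrow}$; moreover $\mathcal{M}$ is not a priori a maximal clique at the stage where its last edge was introduced, which is what Proposition \ref{prop.translate} requires. Passing to the $\preceq$-least $h$ for which $\mathcal{M}$ is a clique, together with the fact that maximality of a clique is inherited by every subgraph in which it remains a clique, is exactly what makes Proposition \ref{prop.translate} applicable. Everything else is routine bookkeeping with the filtration $(\mathrm{Cay}(\mathbb{F},g))_{g \in \mathbb{F}}$ and the left-translation invariance of its edge relation.
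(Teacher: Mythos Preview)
Your proof is correct and follows essentially the same approach as the paper: show $\mathcal{K}_g \setminus \{g\}$ is a clique at stage $g_\uparrow$, descend to the $\preceq$-least stage $h$ where a suitable clique lives, and invoke Proposition~\ref{prop.translate}. The only difference is the order of two steps: the paper first takes $h$ to be the $\preceq$-least element for which $\mathcal{K}_g \setminus \{g\}$ itself is a clique in $\mathrm{Cay}(\mathbb{F},h)$ and \emph{then} enlarges to a maximal clique $\mathcal{M}$ of $\mathrm{Cay}(\mathbb{F},h)$, whereas you enlarge first in $\mathrm{Cay}(\mathbb{F},g_\uparrow)$ and then descend; this costs you the extra (easy) observation that maximality is inherited in the subgraph, which the paper's ordering avoids.
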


\begin{proof}[Proof of Proposition \ref{prop.weird}] Proposition \ref{prop.chord} implies that $(e,g)$ is the unique edge in $\mathcal{K}_g$ which is not an edge in $\mathrm{Cay}(\mathbb{F},g_\uparrow)$. Therefore we have that $\mathcal{K}_g \setminus \{ g\}$ is a clique in $\mathrm{Cay}(\mathbb{F},g_\uparrow)$. Let $h$ be the $\preceq$-least element such that $\mathcal{K}_g \setminus \{g\}$ is a clique in $\mathrm{Cay}(\mathbb{F},h)$. Let $\mathcal{M}$ be a maximal clique in $\mathrm{Cay}(\mathbb{F},h)$ which contains $\mathcal{K}_g \setminus \{g\}$. If $\mathcal{M}$ were a clique in $\mathrm{Cay}(\mathbb{F},h_\uparrow)$ then $\mathcal{K}_g \setminus \{g\}$ would be a clique in $\mathrm{Cay}(\mathbb{F},h_\uparrow)$, contradicting our choice of $h$. Thus Proposition \ref{prop.max} implies $\mathcal{M}$ is contained in a translate of $\mathcal{K}_h$. \end{proof}

\subsection{Full realizations of partial positive definite functions} \label{seg.partreal}

Write $\mathrm{NSPD}_g$ for the space of normalized strictly positive definite functions from $\mathcal{I}_g$ to $\mathbb{C}$. We refer to such a function as a partial positive definite function. Let $\mathsf{C} \in \mathrm{NSPD}_g$. If $\mathcal{J}$ is a clique in $\mathrm{Cay}(\mathbb{F},g)$, there exists a Hilbert space $\mathscr{X}(\mathsf{C},\mathcal{J})$ and a function $\Phi_\mathsf{C}:\mathcal{J} \to \mathscr{X}(\mathsf{C},\mathcal{J})$ such that \begin{equation} \label{eq.blade-1} \langle \Phi_\mathsf{C}(h),\Phi_\mathsf{C}(\ell) \rangle = \mathsf{C}(\ell^{-1}h) \end{equation} for all $h,\ell \in \mathcal{J}$. The assertion that $\mathcal{J}$ is a clique in $\mathrm{Cay}(\mathbb{F},g)$ is equivalent to the assertion that $\mathcal{J}^{-1}\mathcal{J} \subseteq \mathcal{I}_g$. The relevance of this hypothesis is that it ensures the inner products between all pairs of elements of the range of $\Phi$ are defined. If we require that the set  $\{\Phi_\mathsf{C}(h): h \in \mathcal{J}\}$ spans $\mathscr{X}(\mathsf{C},\mathcal{J})$ then these data are unique up to a unique unitary isomorphism. If $\mathsf{C} \in \mathrm{NSPD}_g$ write $\mathscr{X}(\mathsf{C})$ for $\mathscr{X}(\mathsf{C},\mathcal{K}_g)$.

\subsection{Partial realizations of partial positive definite functions}

The following definition makes precise the construction of the space $\mathscr{Z}$ from Subsection \ref{subsec.integers}.

\begin{definition} \label{def.partial} A \textbf{partial Hilbert space} is a vector space $\mathscr{V}$ together with a pair of distinguished subspaces $\mathscr{V}_1$ and $\mathscr{V}_2$ of $\mathscr{V}$ having the following properties. \begin{itemize} \item The subspaces $\mathscr{V}_1$ and $\mathscr{V}_2$ span $\mathscr{V}$. \item Each $\mathscr{V}_m$ is a Hilbert space with inner product $\langle \cdot, \cdot \rangle_m$. \item The inner products on $\mathscr{V}_1$ and $\mathscr{V}_2$ are compatible in the sense that for any pair of vectors $x,y \in \mathscr{V}_1 \cap \mathscr{V}_2$ we have $\langle x,y \rangle_1 = \langle x,y \rangle_2$. \end{itemize} We refer to $\mathscr{V}_1 \cap \mathscr{V}_2$ as the \textbf{core} of $\mathscr{V}$ and denote it by $\mathrm{core}(\mathscr{V})$. \end{definition}

The following proposition is the key to the extension procedure.

\begin{proposition} If $\mathscr{V}$ is a partial Hilbert space then the orthogonal projection from $\mathscr{V}$ onto $\mathrm{core}(\mathscr{V})$ is well-defined. \label{prop.ortho} \end{proposition}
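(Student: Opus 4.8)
The plan is to build the projection explicitly out of the two Hilbert-space structures and then check that it does not depend on any of the choices involved. Since the inner products $\langle\cdot,\cdot\rangle_1$ and $\langle\cdot,\cdot\rangle_2$ agree on $\mathrm{core}(\mathscr{V}) = \mathscr{V}_1 \cap \mathscr{V}_2$, this common inner product turns $\mathrm{core}(\mathscr{V})$ into an inner product space sitting inside each $\mathscr{V}_m$. First I would record that $\mathrm{core}(\mathscr{V})$ is a closed subspace of each $\mathscr{V}_m$ — this is automatic in the finite-dimensional situations in which the proposition is actually used (there all the relevant realization spaces $\mathscr{X}(\mathsf{C},\mathcal{J})$ attached to finite cliques are finite-dimensional), and can be imposed as a standing hypothesis otherwise — so that the orthogonal projection $p_m \colon \mathscr{V}_m \to \mathrm{core}(\mathscr{V})$ with respect to $\langle\cdot,\cdot\rangle_m$ is defined for $m \in \{1,2\}$. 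Each $p_m$ restricts to the identity on $\mathrm{core}(\mathscr{V})$.

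Next, using that $\mathscr{V}_1$ and $\mathscr{V}_2$ span $\mathscr{V}$, I would write an arbitrary $v \in \mathscr{V}$ as $v = v_1 + v_2$ with $v_m \in \mathscr{V}_m$ and set $p(v) = p_1(v_1) + p_2(v_2) \in \mathrm{core}(\mathscr{V})$. The crux is that this is independent of the decomposition. If $v_1 + v_2 = v_1' + v_2'$ with $v_m, v_m' \in \mathscr{V}_m$, then $w := v_1 - v_1' = v_2' - v_2$ lies in $\mathscr{V}_1 \cap \mathscr{V}_2 = \mathrm{core}(\mathscr{V})$, hence $p_1(w) = w = p_2(w)$; therefore $p_1(v_1) - p_1(v_1') = p_1(w) = w$ and $p_2(v_2') - p_2(v_2) = p_2(w) = w$, and adding these gives $p_1(v_1) + p_2(v_2) = p_1(v_1') + p_2(v_2')$. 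So $p \colon \mathscr{V} \to \mathrm{core}(\mathscr{V})$ is well defined, and it is linear because each $p_m$ is. Taking $v_2 = 0$ shows $p$ restricts to $p_m$ on $\mathscr{V}_m$; taking $v \in \mathrm{core}(\mathscr{V})$ shows $p$ is the identity on $\mathrm{core}(\mathscr{V})$, so $p$ is idempotent with range exactly $\mathrm{core}(\mathscr{V})$; and for $v \in \mathscr{V}_m$ the vector $v - p(v) = v - p_m(v)$ is $\langle\cdot,\cdot\rangle_m$-orthogonal to $\mathrm{core}(\mathscr{V})$. These properties are exactly what it means for the orthogonal projection of $\mathscr{V}$ onto its core to be well defined.

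The only genuine subtlety is the consistency check in the second paragraph, and that is precisely where the compatibility clause of Definition \ref{def.partial} — the agreement of the two inner products on $\mathrm{core}(\mathscr{V})$ — is essential: without it the maps $p_1$ and $p_2$ need not agree on $\mathscr{V}_1 \cap \mathscr{V}_2$ and no single linear $p$ could exist. The closedness of $\mathrm{core}(\mathscr{V})$ in each $\mathscr{V}_m$ is a minor point to dispose of at the start, needed only to guarantee that the individual projections $p_m$ exist, and it costs nothing in every case we apply the proposition since there the ambient spaces are finite-dimensional.
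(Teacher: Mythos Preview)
Your proof is correct and follows essentially the same idea as the paper's: both hinge on the observation that the compatibility clause makes the pairing between an arbitrary element of $\mathscr{V}$ and any element of $\mathrm{core}(\mathscr{V})$ unambiguous, so one can project. The paper phrases this via Gram--Schmidt on $\mathrm{core}(\mathscr{V})$ and then computing inner products against the resulting orthonormal basis, whereas you glue the two abstract projections $p_1,p_2$ along a decomposition $v=v_1+v_2$; your version is a bit more explicit about the well-definedness check, but the content is the same.
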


\begin{proof}[Proof of Proposition \ref{prop.ortho}] The third item in Definition \ref{def.partial} ensures that we can apply the Gram-Schmidt procedure to obtain an orthonormal basis for $\mathrm{core}(\mathscr{V})$. The first and second items in Definition \ref{def.partial} imply that we can compute the inner products between an arbitrary element of $\mathscr{V}$ and and element of $\mathrm{core}(\mathscr{V})$. Therefore Proposition \ref{prop.ortho} follows. \end{proof}

We now describe how to associate a partial Hilbert space to a partial positive definite function. This partial Hilbert space will be used to calculate the set of legal possibilities for an extension of an element of $\mathrm{NSPD}_{g_\uparrow}$ to an element of $\mathrm{NSPD}_g$.\\
\\
Let $\mathsf{C} \in \mathrm{NSPD}_{g_\uparrow}$. Proposition \ref{prop.clique} implies the clique $\mathcal{K}_g$ in $\mathrm{Cay}(\mathbb{F},g)$ contains a unique edge which is not an edge in $\mathrm{Cay}(\mathbb{F},g_\uparrow)$. This must be the edge between $g$ and $e$. Therefore the sets $\mathcal{K}_{g} \setminus \{g\}$ and $\mathcal{K}_{g} \setminus \{e\}$ are cliques in $\mathrm{Cay}(\mathbb{F},g_\uparrow)$. Hence the discussion in Segment \ref{seg.partreal} implies that we can construct two Hilbert spaces $\mathscr{X}(\mathsf{C},\mathcal{K}_g \setminus \{g\})$ and $\mathscr{X}(\mathsf{C},\mathcal{K}_g \setminus \{e\})$ together with functions \[ \Phi_\mathsf{C}:\mathcal{K}_{g} \setminus \{g\} \to \mathscr{X}(\mathsf{C},\mathcal{K}_g \setminus \{g\}) \] and \[ \Psi_\mathsf{C}: \mathcal{K}_{g} \setminus \{e\} \to \mathscr{X}(\mathsf{C},\mathcal{K}_g \setminus \{e\}) \] such that $\Phi_\mathsf{C}$ and $\Psi_\mathsf{C}$ satisfy (\ref{eq.blade-1}) on their domains. 

\begin{proposition} Let $\mathsf{C} \in \mathrm{NSPD}_{g_\uparrow}$. Then there exists a partial Hilbert space $\mathscr{X}(\mathsf{C})_\bullet$ with distinguished subspaces that can be identified with $\mathscr{X}(\mathsf{C},\mathcal{K}_g \setminus \{g\})$ and $\mathscr{X}(\mathsf{C},\mathcal{K}_g \setminus \{e\})$. Moreover, if $h \in \mathcal{K}_g \setminus \{g,e\}$ then $\Psi_\mathsf{C}(h) = \Phi_\mathsf{C}(h)$ and $\mathrm{core}(\mathscr{X}(\mathsf{C})_\bullet)$ consists exactly of the span of these vectors. \label{prop.partial} \end{proposition}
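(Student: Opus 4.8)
The plan is to mimic the construction of the space $\mathscr{Z}$ in Subsection \ref{subsec.integers}: glue $\mathscr{X}(\mathsf{C},\mathcal{K}_g \setminus \{g\})$ and $\mathscr{X}(\mathsf{C},\mathcal{K}_g \setminus \{e\})$ along the portions of their canonical bases indexed by $\mathcal{K}_g \setminus \{g,e\}$, and then check the three axioms of Definition \ref{def.partial}.

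First I would record that $\mathcal{K}_g \setminus \{g,e\}$ is a clique in $\mathrm{Cay}(\mathbb{F},g_\uparrow)$, being a subset of the clique $\mathcal{K}_g \setminus \{g\}$. Let $W_\Phi \subseteq \mathscr{X}(\mathsf{C},\mathcal{K}_g \setminus \{g\})$ be the span of $\{\Phi_\mathsf{C}(h)_j : h \in \mathcal{K}_g \setminus \{g,e\},\ j \in [d]\}$ and let $W_\Psi \subseteq \mathscr{X}(\mathsf{C},\mathcal{K}_g \setminus \{e\})$ be the span of $\{\Psi_\mathsf{C}(h)_j : h \in \mathcal{K}_g \setminus \{g,e\},\ j \in [d]\}$. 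By (\ref{eq.blade-1}) the Gram matrices of these two spanning families, computed in their respective inner products, both equal the kernel $(h,j,\ell,k) \mapsto \mathsf{C}(\ell^{-1}h)_{j,k}$ on $(\mathcal{K}_g \setminus \{g,e\}) \times [d]$. Since $\mathsf{C}$ is strictly positive definite, both families are linearly independent, so there is a unique linear isomorphism $\theta: W_\Phi \to W_\Psi$ with $\theta(\Phi_\mathsf{C}(h)_j) = \Psi_\mathsf{C}(h)_j$, and $\theta$ is unitary for the inherited inner products.

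Next I would form the amalgamated sum
\[ \mathscr{X}(\mathsf{C})_\bullet = \bigl(\mathscr{X}(\mathsf{C},\mathcal{K}_g \setminus \{g\}) \oplus \mathscr{X}(\mathsf{C},\mathcal{K}_g \setminus \{e\})\bigr)\big/\bigl\{(w,-\theta(w)) : w \in W_\Phi\bigr\}, \]
with $\mathscr{V}_1$ and $\mathscr{V}_2$ the images of the two summands under the quotient map. Because $\theta$ is an isomorphism onto $W_\Psi$, both summand maps are injective, so $\mathscr{V}_1$ and $\mathscr{V}_2$ are identified as vector spaces with $\mathscr{X}(\mathsf{C},\mathcal{K}_g \setminus \{g\})$ and $\mathscr{X}(\mathsf{C},\mathcal{K}_g \setminus \{e\})$, and I transport the Hilbert space structures along these identifications. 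They span $\mathscr{X}(\mathsf{C})_\bullet$ by construction, giving the first two items of Definition \ref{def.partial}. Moreover, for $h \in \mathcal{K}_g \setminus \{g,e\}$ the element $(\Phi_\mathsf{C}(h)_j, -\Psi_\mathsf{C}(h)_j) = (\Phi_\mathsf{C}(h)_j, -\theta(\Phi_\mathsf{C}(h)_j))$ lies in the quotiented subspace, so the images of $\Phi_\mathsf{C}(h)_j$ and $\Psi_\mathsf{C}(h)_j$ coincide in $\mathscr{X}(\mathsf{C})_\bullet$; I let $\Phi_\mathsf{C}(h)_j = \Psi_\mathsf{C}(h)_j$ denote this common vector.

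For the compatibility axiom and the description of the core, the key point is the standard pushout fact that $\mathscr{V}_1 \cap \mathscr{V}_2$ is exactly the common image of $W_\Phi$ and $W_\Psi$: a dimension count gives $\dim \mathscr{X}(\mathsf{C})_\bullet = \dim \mathscr{X}(\mathsf{C},\mathcal{K}_g \setminus \{g\}) + \dim \mathscr{X}(\mathsf{C},\mathcal{K}_g \setminus \{e\}) - \dim W_\Phi$, and comparing with $\dim \mathscr{V}_1 + \dim \mathscr{V}_2 - \dim(\mathscr{V}_1 \cap \mathscr{V}_2)$, together with the obvious inclusion of the common image of $W_\Phi$ and $W_\Psi$ into $\mathscr{V}_1 \cap \mathscr{V}_2$, forces equality. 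This image is precisely the span of $\{\Phi_\mathsf{C}(h)_j = \Psi_\mathsf{C}(h)_j : h \in \mathcal{K}_g \setminus \{g,e\},\ j \in [d]\}$, which is therefore $\mathrm{core}(\mathscr{X}(\mathsf{C})_\bullet)$. On this subspace both $\langle \cdot,\cdot\rangle_1$ and $\langle \cdot,\cdot\rangle_2$ are computed from the same Gram matrix via (\ref{eq.blade-1}), so they agree, yielding the third item. I expect the only delicate point to be the verification that the pushout does not manufacture extra vectors in $\mathscr{V}_1 \cap \mathscr{V}_2$; this is where strict positive definiteness — hence finite dimensionality and a clean dimension count — is essential, and everything else parallels the $\mathbb{Z}$ case.
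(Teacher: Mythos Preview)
Your proposal is correct and follows essentially the same approach as the paper: the paper's proof is a single sentence declaring $\mathscr{X}(\mathsf{C})_\bullet$ to be the quotient of the disjoint union of the two realization spaces by the relation identifying $\Phi_\mathsf{C}(h)_j$ with $\Psi_\mathsf{C}(h)_j$ for $h \in \mathcal{K}_g \setminus \{g,e\}$, which is exactly your amalgamated sum. Your version supplies the verifications (injectivity of the summand maps, the dimension count pinning down the core, compatibility of the inner products) that the paper leaves implicit.
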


\begin{proof}[Proof of Proposition \ref{prop.partial}] We take $\mathscr{X}(\mathsf{C})_\bullet$ to be the quotient of the disjoint union of $\mathscr{X}(\mathsf{C},\mathcal{K}_g \setminus \{g\})$ and  $\mathscr{X}(\mathsf{C},\mathcal{K}_g \setminus \{e\})$ by the equivalence relation which identifies $\Phi_\mathsf{C}(h)$ and $\Psi_\mathsf{C}(h)$ for $h \in \mathcal{K}_g \setminus \{g,e\}$. \end{proof}

If $\mathsf{C} \in \mathrm{NSPD}_{g_\uparrow}$ we define \begin{equation} \label{eq.lewis-1} \Theta_\mathsf{C}(h) = \begin{cases} \Phi_\mathsf{C}(h) = \Psi_\mathsf{C}(h) \mbox{ if }h \in \mathcal{K}_g \setminus \{g,e\} \\ \Psi_\mathsf{C}(h)\mbox{ if }h=g \\ \Phi_\mathsf{C}(h) \mbox{ if }h=e \end{cases} \end{equation} Thus $\{\Theta_\mathsf{C}(h):h \in \mathcal{K}_g \}$ forms a canonical basis for $\mathscr{X}(\mathsf{C})_\bullet$.

\subsection{Parameterizing extensions} \label{subsec.extension-def}

In Segment \ref{subsec.extension-def} we describe the procedure for extending positive definite functions. This construction has its roots in \cite{MR2316876} and in Section 3.6 of \cite{MR2807419}.\\
\\
Let $g \in \mathbb{F}$ and fix $\mathsf{C} \in \mathrm{NSPD}_{g_\uparrow}$. We wish to understand extensions of $\mathsf{C}$ to an element of $\mathrm{NSPD}_g$. In order to describe such an extension it suffices to specify the number $\mathsf{C}(g)$ since then we must have $\mathsf{C}(g^{-1}) = \ov{\mathsf{C}(g)}$.\\
\\
We begin by constructing the partial Hilbert space $\mathscr{X}(\mathsf{C})_\bullet$ as in Proposition \ref{prop.partial}. Specifying the matrix $\mathsf{C}(g)$ amounts to specifying the inner product between $\Theta_\mathsf{C}(g)$ and $\Theta_\mathsf{C}(e)$. Write $I$ for the identity operator on $\mathscr{X}(\mathsf{C})_\bullet$. By Proposition \ref{prop.ortho} we can consider the orthogonal projection $p$ from $\mathscr{X}(\mathsf{C})_\bullet$ onto $\mathrm{core}(\mathscr{X}(\mathsf{C})_\bullet)$. Given a complex number $\zeta$ with $|\zeta| < 1$, we can make the following definition, which generalizes (\ref{eq.waiting}). Set \begin{equation} \label{eq.done-1} \left \langle \frac{(I-p)  \Theta_\mathsf{C}(g)}{||(I-p)  \Theta_\mathsf{C}(g)||}, \frac{(I-p)  \Theta_\mathsf{C}(e)}{||(I-p)  \Theta_\mathsf{C}(e)||} \right \rangle = \zeta \end{equation}

The hypothesis that $\mathsf{C}$ is strictly positive definite implies that the denominators of the fractions in (\ref{eq.done-1}) are nonzero. As with (\ref{eq.waiting}), the requirement that $|\zeta| \leq 1$ is immediate from the need to satisfy the Cauchy-Schwartz inequality. The requirement that $|\zeta| \neq 1$ will be discussed in Segment \ref{subsec.done}. From (\ref{eq.done-1}) we have the analog of (\ref{eq.waiting-2}), whereby we recover $\mathsf{C}(g)$ as \begin{align} \mathsf{C}(g)& = \langle \Theta_\mathsf{C}(g),\Theta_\mathsf{C}(e) \rangle \nonumber \\ & = \zeta ||(I-p) \Theta_\mathsf{C}(g)||\,||(I-p) \Theta_\mathsf{C}(e)|| + \langle p  \Theta_\mathsf{C}(g),p\Theta_\mathsf{C}(e) \rangle \label{eq.hey-0} \end{align}

The norms and inner product in (\ref{eq.hey-0}) are determined by $\mathsf{C}$, so that $\zeta$ is indeed the only free parameter. The fact that any value of $\zeta$ with $|\zeta| \leq 1$ produces a valid positive definite function follows by the analog of (\ref{eq.oplus}), which is the decomposition

\[ \mathscr{X}(\mathsf{C})_\bullet = \mathrm{core}(\mathscr{X}(\mathsf{C})_\bullet) \oplus \mathrm{span}\bigl((I-p)\Theta_\mathsf{C}(g),(I-p)\Theta_\mathsf{C}(e)\bigr) \] valid for any value of $\zeta$. Write $\mathsf{C}^\zeta$ for the extension of $\mathsf{C}$ by $\zeta$. Thus after extending by $\zeta$ the partial Hilbert space $\mathscr{X}(\mathsf{C})_\bullet$ is promoted to a full Hilbert space $\mathscr{X}(\mathsf{C}^\zeta)$. The Hilbert space $\mathscr{X}(\mathsf{C}^\zeta)$ has a canonical basis indexed by $\mathcal{K}_g$ and we can regard $\mathsf{C}^\zeta$ as an element of $\mathrm{NSPD}_g$. It is natural to think of $\zeta$ as a noncommutative Szeg\"{o} parameter. \\
\\
If we choose $\zeta = 0$ at every step of the extension procedure we obtain the so-called `central extension', which corresponds to the construction of a higher-step Markov process on the free group. This is the construction given in Lemma 24 of \cite{MR3067294}. The central extension does not have the properties required to prove Theorem \ref{thm} and we must choose the extension parameters for $\mathsf{C}$ more carefully. However, in Theorem \ref{thm} we can take $\hat{\mathsf{D}}$ to be the central extension of $\mathsf{D}$. \\

\subsection{Transport operators of partially defined functions} \label{subsec.transport}

We will need the following analog of Definition \ref{def.transportop}.

\begin{definition} \label{def.transport} Let $g \in \mathbb{F}$ and let $\mathsf{C},\mathsf{D} \in \mathrm{NSPD}_g$. The \textbf{transport operator} between the partial Hilbert spaces $\mathscr{X}(\mathsf{C})_\bullet$ and $\mathscr{X}(\mathsf{D})_\bullet$ is denoted $t[\mathsf{C},\mathsf{D}]$ and is given by setting \[ t[\mathsf{C},\mathsf{D}]  \sum_{h \in \mathcal{K}_g} \alpha(h) \Theta_\mathsf{C}(h) = \sum_{h \in \mathcal{K}_g} \alpha(h) \Theta_\mathsf{D}(h) \]  for $\alpha:\mathcal{K}_g \to \mathbb{C}$.  We define the \textbf{relative energy} of the pair $(\mathsf{C},\mathsf{D})$ to be the maximum of the squares of the operator norms of the restrictions of $t[\mathsf{C},\mathsf{D}]$ to the distinguished subspaces $\mathscr{X}(\mathsf{C})_g$ and $\mathscr{X}(\mathsf{C})_e$. We denote the relative energy of the pair $(\mathsf{C},\mathsf{D})$ by $\mathfrak{e}(\mathsf{C},\mathsf{D})$. \end{definition}

There is a slight difference between Definitions \ref{def.transportop} and \ref{def.transport}. If $|g| = 2r$ and $g$ is the $\preceq$ last element of its length then $\mathcal{K}_g$ is a translate of $\mathbb{B}_r$ different from $\mathbb{B}_r$.

\subsection{Degenerate extensions} \label{subsec.done}

Observe that the construction described in Segment \ref{subsec.extension-def} makes sense if we choose $\zeta$ to be an element of the unit circle. However, in this case the resulting extension will not be strictly positive definite and so the denominators of the fractions in (\ref{eq.done-1}) will be zero at some later stage of the procedure. Thus we regard $|\zeta| = 1$ as an unacceptable degeneracy. Intuitively, such a degeneracy corresponds to a cycle in the time evolution represented by the extension procedure. The following definition will allow us to avoid this issue in the context of minimizing relative energies.

\begin{definition} \label{def.singdeg} Let $g \in \mathbb{F}$ and let $\mathsf{C},\mathsf{D} \in \mathrm{NSPD}_g$. We define the pair $(\mathsf{C},\mathsf{D})$ to have \textbf{singular degeneracies} if there is a constant $c(\mathsf{C},\mathsf{D})>0$ such that \[ \mathfrak{e}(\mathsf{C}^\zeta,\mathsf{D}^\mu) \geq \frac{c(\mathsf{C},\mathsf{D})}{1-|\zeta|^2} \] for all $\zeta,\mu \in \mathbb{D}$. \end{definition}

We now formulate a lemma that will be used in the proof of Theorem \ref{thm}. We will prove it below in Section \ref{segment.timeloop}. 

\begin{lemma}[Small perturbations give singular degeneracies] \label{lem.singular} Let $g \in \mathbb{F}$ be such that $|g| \geq 5$ and let $\epsilon >0$. Let also $\mathsf{C}_1,\mathsf{C}_2 \in \mathrm{NSPD}_g$. Then there exist $\underline{\mathsf{C}}_1,\underline{\mathsf{C}}_2 \in \mathrm{NSPD}_g$ such that \[ \max_{j \in \{1,2\}}||\mathsf{C}_j - \underline{\mathsf{C}}_j||_1 \leq \epsilon\] and such the pair $(\underline{\mathsf{C}}_1,\underline{\mathsf{C}}_2)$ has singular degeneracies. \end{lemma}

\section{Proof of Lemma \ref{lem.singular}} \label{segment.timeloop}

In Section \ref{segment.timeloop} we prove Lemma \ref{lem.singular}.

\subsection{Smoothness of Gram-Schmidt} \label{seg.gram}

In Segment \ref{seg.gram} we establish Proposition \ref{prop.star}. This is a general result about the Gram-Schmidt procedure which is likely well-known. 

\begin{definition} \label{def.star} Let $n \in \mathbb{N}$ and let $M \in \mathrm{Mat}_{n \times n}(\mathbb{C})$ be strictly positive definite with ones on the diagonal. Let $\mathscr{Z}$ be a Hilbert space and let $y_1,\ldots,y_n \in \mathscr{Z}$ be a basis such that $\langle y_j,y_k \rangle= M_{j,k}$ for all $j,k \in [n]$. Let $z_1,\ldots,z_n$ be the orthogonal basis obtained by applying the Gram-Schmidt procedure to $y_1,\ldots,y_n$. We define the \textbf{orthogonalization matrix} of $M$ to be the $n \times n$ matrix which changes $y_1,\ldots,y_n$ coordinates to $z_1,\ldots,z_n$ coordinates. We denote the orthogonalization matrix by $\mathcal{G}(M)$. Also define the \textbf{orthonormalization matrix} of $M$ to be the matrix which changes $y_1,\ldots,y_n$ coordinates to $z_1||z_1||^{-1},\ldots,z_n||z_n||^{-1}$ coordinates. We denote the orthonormalization matrix by $\mathcal{N}(M)$.  \end{definition}

\begin{proposition} \label{prop.star} The entries of $\mathcal{G}(M)$ are analytic functions of the entries of $M$. The entries of $\mathcal{N}(M)$ are differentiable functions of $M$. \ \end{proposition}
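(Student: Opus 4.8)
The plan is to extract an explicit formula for the Gram--Schmidt coefficients and read the two regularity claims off of it. Fix a strictly positive definite $M \in \mathrm{Mat}_{n \times n}(\mathbb{C})$ with ones on the diagonal, vectors $y_1,\dots,y_n$, and their Gram--Schmidt orthogonalization $z_1,\dots,z_n$ as in Definition \ref{def.star}, and write $z_k = \sum_{j=1}^{k} g_{k,j} y_j$ with $g_{k,k}=1$. The key first step is a direct characterization of $z_k$: since the recursion gives $\mathrm{span}(z_1,\dots,z_{k-1}) = \mathrm{span}(y_1,\dots,y_{k-1})$ and $z_k - y_k$ lies in this span, $z_k$ is the unique vector of the form $y_k + \sum_{j<k} c_j y_j$ which is orthogonal to $y_1,\dots,y_{k-1}$ (uniqueness holds because $y_1,\dots,y_{k-1}$ are linearly independent, $M$ being strictly positive definite). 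Expanding $z_k$ inside whichever argument of the inner product is linear, the orthogonality conditions become a square linear system in $c_1,\dots,c_{k-1}$ whose coefficient matrix is the leading $(k-1) \times (k-1)$ principal submatrix $M_{[k-1]}$ of $M$ (possibly transposed) and whose right-hand side is built from entries of $M$, with no complex conjugates appearing.

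I would then solve this system by Cramer's rule. Since every leading principal submatrix $M_{[i]}$ is itself strictly positive definite, its determinant does not vanish, so each $c_j$ --- and hence each $g_{k,j}$ --- is a ratio of polynomials in the entries of $M$ with denominator $\det M_{[k-1]}$. Thus every $g_{k,j}$ is holomorphic in the entries of $M$ throughout the open set on which the leading principal minors $\det M_{[1]},\dots,\det M_{[n-1]}$ are nonzero, and this set contains all strictly positive definite matrices (there the minors are in fact positive). The matrix $\mathcal{G}(M)$ differs from the unipotent matrix $(g_{k,j})$ only by a transposition and an inversion, and inverting a matrix whose entries are analytic and whose determinant is nonvanishing preserves analyticity; this gives the first assertion. (One could instead deduce this from the real-analytic inverse function theorem applied to $R \mapsto R^\ast R$ on upper-triangular matrices with positive diagonal, but the explicit computation is more elementary and self-contained.)

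Finally, the orthonormalization matrix $\mathcal{N}(M)$ is obtained from $\mathcal{G}(M)$ by dividing its $k$-th row by $\|z_k\|$, and using $z_k \perp y_1,\dots,y_{k-1}$ one computes $\|z_k\|^2 = \langle z_k, z_k \rangle = \langle z_k, y_k \rangle = M_{k,k} + \sum_{j<k} c_j M_{j,k}$, an analytic function of the entries of $M$ that is strictly positive on the strictly positive definite cone. Since $t \mapsto t^{-1/2}$ is smooth on $(0,\infty)$, the entries of $\mathcal{N}(M)$ are differentiable in $M$, as claimed (the same reasoning in fact yields real-analyticity). The only point demanding any care is the bookkeeping in the first step: one must place $z_k$ in the correct argument of the Hermitian form so that the Cramer formulas involve the entries of $M$ rather than their conjugates. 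Using the other argument would still give functions that are real-analytic in the real and imaginary parts of the entries of $M$ --- which is all that Proposition \ref{prop.star} and its applications actually require --- but it would obscure the cleaner holomorphic statement; no other step presents an obstacle.
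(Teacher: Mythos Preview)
Your proposal is correct and follows essentially the same route as the paper: both arguments characterize $z_k$ by the orthogonality conditions, solve the resulting linear system by Cramer's rule to express the Gram--Schmidt coefficients as ratios of minors of $M$, and then observe that dividing by $\|z_k\|$ introduces only a square root of a nonvanishing analytic quantity. The paper packages this as an induction on $n$ and quotes the explicit determinantal formulas from Gantmacher, whereas you derive them directly and then pass through the (analyticity-preserving) inversion to recover $\mathcal{G}(M)$ itself; the content is the same.
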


\begin{proof}[Proof of Proposition \ref{prop.star}] We establish the proposition by induction on $n$. The case $n=1$ is trivial, so assume we have established the result for $n$ with the goal of establishing it for $n+1$.  Fix an $n \times n$ strictly positive definite matrix $M_\bullet$ with ones on the diagonal. If $M$ is an $(n +1) \times (n+1)$ matrix we have that the first $n$ columns of $\mathcal{G}(M)$ depend only on the $n \times n$ upper left corner of $M$. Therefore we will express $M$ in terms of a variable $\mathbf{x} = (x_1,\ldots,x_n,1) \in \mathbb{C}^{n+1}$ representing a column to be augmented on the right of $M_\bullet$ to form a $(n+1) \times (n+1)$ strictly positive definite matrix $M(\mathbf{x})$. Note that for a given strictly positive definite $M_\bullet$ the set of $\mathbf{x} \in \mathbb{C}^{n+1}$ such that $M(\mathbf{x})$ is strictly positive definite is open. Moreover, it is nonempty since it contains the vector $(0,\ldots,0,1)$. \\
\\
Let $\mathbf{q}(\mathbf{x}) = (q_1(\mathbf{x}),\ldots,q_n(\mathbf{x}),1)$ be the $k+1$ column of $\mathcal{G}(M(\mathbf{x}))$. For $k \in [n]$ let $M_\bullet(\check{k})$ be $M_\bullet$ with the $k^{\mathrm{th}}$ column removed. We have the following expression for $\mathbf{q}(\mathbf{x})$.

\begin{equation} \label{eq.star-5} q_k(\mathbf{x}) = (-1)^{k+n}\frac{\mathrm{det}(M_\bullet(\check{k}) \,| \mathbf{x})}{\mathrm{det}(M_\bullet)}. \end{equation} This appears, for example, as (35) in Section 6 of Chapter $\mathrm{IX}$ of \cite{MR869996}. The first clause in Proposition \ref{prop.star} is clear from (\ref{eq.star-5}). Writing $(r_1(\mathbf{x}),\ldots,r_n(\mathbf{x}),r_{n+1}(\mathbf{x}))$ for the $k+1$ column of $\mathcal{N}(M(\mathbf{x}))$ from the same reference we have the formula \begin{equation} \label{eq.star-88} r_k(\mathbf{x}) = (-1)^{k+n}\frac{\mathrm{det}(M_\bullet(\check{k}) \,| \mathbf{x})}{\sqrt{\mathrm{det}(M_\bullet)\mathrm{det}(M(\mathbf{x}))}}. \end{equation} The second clause in Proposition \ref{prop.star} is clear from (\ref{eq.star-88}).  \end{proof}

\subsection{First perturbation of the configuration} \label{seg.1pert}

Let $g \in \mathbb{F}$, let $\epsilon > 0$ and let $\mathsf{C}_1,\mathsf{C}_2 \in \mathrm{NSPD}_g$. For $s \in \{1,\ldots,|g|\}$ let $g_s$ be the word consisting of the first $s$ letters of $g$. Given a $4$-tuple $\Lambda= (\lambda_1,\lambda_2,\lambda_3,\lambda_4)$ of complex numbers and $j \in \{1,2\}$, let $\mathsf{C}_{j,\Lambda}$ denote the modification of $\mathsf{C}_j$ given by setting 

\begin{align*}  \mathsf{C}_{j,\Lambda}(g_1^{-1}g) & =  \ov{\mathsf{C}_{j,\Lambda}(g^{-1}g_1)}   = \mathsf{C}_j(g_1^{-1}g)+\lambda_1 \\ \mathsf{C}_{j,\Lambda}(g_2^{-1}g) &= \ov{\mathsf{C}_{j,\Lambda}(g^{-1}g_2)} =  \mathsf{C}_j(g_2^{-1}g)+\lambda_2 \\ \mathsf{C}_{j,\Lambda}(g_1^{-1}) &= \ov{\mathsf{C}_{j,\Lambda}(g_1)} =  \mathsf{C}_j(g_1^{-1})+\lambda_3 \\ \mathsf{C}_{j,\Lambda}(g_2^{-1}) & =\ov{\mathsf{C}_{j,\Lambda}(g_2)} =  \mathsf{C}_j(g_2^{-1})+\lambda_4 \end{align*} and leaving all other entries of $\mathsf{C}_j$ unchanged. Since the space of strictly positive definite functions is open, if $||\Lambda||_1$ is small enough we will have $\mathsf{C}_{j,\Lambda}\in \mathrm{NSPD}_g$.\\
\\
Let $w_{j,\Lambda}$ be the orthogonal projection from $\mathscr{X}(\mathsf{C}_{j,\Lambda})_\bullet$ onto the span of $\Theta_{\mathsf{C}_{j,\Lambda}}(g_1)$ and $\Theta_{\mathsf{C}_{j,\Lambda}}(g_2)$. Let $W_{j,\lambda}$ be the matrix of $w_{j,\Lambda}$ constructed with respect to the canonical basis $\{\Theta_{\mathsf{C}_{j,\Lambda}}(h): h \in \mathcal{K}_g \}$.\\
 \\
Note that this matrix does not depend on the correlation between $\Theta_{\mathsf{C}_j}(g)$ and $\Theta_{\mathsf{C}_j}(e)$ and so it is well-defined on the partial Hilbert space $\mathscr{X}(\mathsf{C}_{j,\Lambda})_\bullet$. Let $W'_{j,\Lambda}$ be the restriction of $W_{j,\Lambda}$ to the span of $\{\Theta_{\mathsf{C}_{j,\Lambda}}(g), \Theta_{\mathsf{C}_{j,\Lambda}}(e)\}$. Thus $W'_{j,\Lambda}$ is a $2 \times 2$ square matrix. Since $W'_{j,\Lambda}$ is an injective affine function of $\Lambda$ we can fix $\Lambda_j$ for each $j \in \{1,2\}$ such that $\min_{j \in \{1,2\}} |\det(W'_{j,\Lambda_j})| > 0$ and such that each $||\Lambda_j||_1$ small enough that $\mathsf{C}_{j,\Lambda} \in \mathrm{NSPD}_g$ and \[ \max_{j \in \{1,2\}} ||\mathsf{C}_j - \mathsf{C}_{j,\Lambda_j}||_1 \leq \frac{\epsilon}{2} \] Write $\tilde{\mathsf{C}}_j$ for this $\mathsf{C}_{j,\Lambda_j}$.

\subsection{Second perturbation of the configuration}

Let $\Delta \in \mathrm{NSPD}_g$ be given by setting $\Delta(e) = 1$ and letting all other values of $\Delta$ be equal to $0$. For $s \in [0,1]$ and $j \in \{1,2\}$ let $\tilde{\mathsf{C}}_{j,s} = (1-s)\tilde{\mathsf{C}}_j + s \Delta$.\\
\\
We specify that when performing the Gram-Schmidt orthogonalization procedure on the canonical basis $\{\Theta_{\tilde{\mathsf{C}}_{j,s}}(h):h \in \mathcal{K}_g\}$ the vectors $\{\Theta_{\tilde{\mathsf{C}}_{j,s}}(g),\Theta_{\tilde{\mathsf{C}}_{j,s}}(e) \}$ should be the last to be orthogonalized. Let $A_{j,s} = (\mathbf{I} \oplus \mathbf{0}) \mathcal{G}(\tilde{\mathsf{C}}_{j,s})$, where $\mathbf{I}$ is a copy of the identity matrix corresponding to the indexes in $\mathcal{K}_g \setminus \{g,e\}$ and $\mathbf{0}$ is a copy of the zero matrix corresponding to the indexes $\{g,e\}$. Note that since the last two rows of $A_{j,s}$ are zero, this matrix is well-defined even though the inner products $\langle \Theta_{\tilde{\mathsf{C}}_{j,s}}(g),\Theta_{\tilde{\mathsf{C}}_{j,s} }(e) \rangle$ are not yet specified. \\
\\
Proposition \ref{prop.star} implies that the entries of $A_{j,s}$ are real analytic functions of $s$. The kernel of $A_{j,s}$ is equal to the orthogonal complement of $\mathrm{core}(\mathscr{X}(\tilde{\mathsf{C}}_{j,s})_\bullet)$ in $\mathscr{X}(\tilde{\mathsf{C}}_{j,s})_\bullet$. Let $Q_{j,s}$ be the span of $\{\Theta_{\tilde{\mathsf{C}}_{j,s}}(g),\Theta_{\tilde{\mathsf{C}}_{j,s}}(e) \}$. Invertibility of the matrices $W'_{j,\Lambda_j}$ implies that orthogonal complement of $\mathrm{core}(\mathscr{X}(\tilde{\mathsf{C}}_{j,s})_\bullet)$ in $\mathscr{X}(\tilde{\mathsf{C}}_{j,s})_\bullet$ has trivial intersection with $Q_{j,s}$. Therefore the kernel of the matrix $A_{j,0}$ has trivial intersection with vectors supported on $\{g,e\}$. On the other hand, it is clear that the kernel of $A_{j,1}$ is equal to the vectors supported on $\{g,e\}$. Hence if we write \[ D(s_1,s_2) = \mathrm{det}\bigl(A_{1,s_1}^\ast A_{1,s_1} + A_{2,s_2}^\ast A_{2,s_2}\bigr) \] then $D(0,1) > 0$. Since $D(s_1,s_2)$ is a real analytic function of $s_1$ and $s_2$ we see that the set of pairs $(s_1,s_2) \in [0,1]^2$ such that $D(s_1,s_2) = 0$ has Lebesgue measure $0$. Hence we can choose $s_1,s_2$ such that $D(s_1,s_2) \neq 0$ and such that \[ \max_{j \in \{1,2\}} || \tilde{\mathsf{C}}_j - \tilde{\mathsf{C}}_{j,s_j}||_1 \leq \frac{\epsilon}{2} \]

Write $\underline{\mathsf{C}}_j$ for $\tilde{\mathsf{C}}_{j,s_j}$ and write $\tilde{A}_j$ for $A_{j,s_j}$. Since $D(s_1,s_2) > 0$ we see that the kernel of $\tilde{A}_1$ has trivial intersection with the kernel of $\tilde{A}_2$.Let $\theta > 0$ be such that if $\alpha:\mathcal{K}_g \to \mathbb{C}$ satisfies $||\alpha||_2 \geq 1$ and $\tilde{A}_1\alpha = 0$ then $||\tilde{A}_2\alpha||_2 \geq \theta$. Let $B$ be the orthogonal basis for $\mathrm{core}(\mathscr{X}(\underline{\mathsf{C}}_j)_\bullet)$ obtained by applying the Gram-Schmidt orthogonalization procedure to $\{ \Theta_{\underline{\mathsf{C}}_2}(h):h \in \mathcal{K}_g \setminus \{g,e\} \}$. Let $\kappa$ be the minimal norm of among all elements of $B$.

\subsection{Establishing the existence of energy singularities}

Fix $\zeta,\mu \in \mathbb{D}$ and consider the extensions $\underline{\mathsf{C}}_1^\zeta,\underline{\mathsf{C}}_2^\mu \in \mathrm{NSPD}_g$. Let $F$ be the orthogonal basis produced obtained applying the Gram-Schmidt orthogonalization procedure to the canonical basis $\{\Theta_{\underline{\mathsf{C}}_2^\mu}(h):h \in \mathcal{K}_g\}$. Note that this orthogonalization procedure can be completed in its entirety because we have specified $\langle \Theta_{\underline{\mathsf{C}}_2^\mu}(g),\Theta_{\underline{\mathsf{C}}_2^\mu}(e) \rangle = \mu$. Moreover, we stipulate that when performing this procedure the vectors $\Theta_{\underline{\mathsf{C}}^\mu_2}(g)$ and $\Theta_{\underline{\mathsf{C}}_2^\mu}(e)$ are orthogonalized at the last stage so that $F$ extends $B$.  \\
\\
Consider a function $\alpha:\mathcal{K}_g \to \mathbb{C}$, which defines a vector \[ x = \sum_{h \in \mathcal{K}_g} \alpha(h) \Theta_{\underline{\mathsf{C}}_2^\mu}(h) \] in the space $\mathscr{X}(\underline{\mathsf{C}}_2^\mu)$. If we rewrite $x$ in the orthogonal basis $F$, the resulting coordinates are given by $\tilde{A}_2\alpha$. Therefore $||x|| \geq \kappa ||\tilde{A}_l\alpha||_2$. From our choice of $\theta$ we see that if $\tilde{A}_1\alpha = 0$ then $||\tilde{A}_2\alpha||_2 \geq \theta||\alpha||_2$ so that $||x|| \geq \kappa \theta ||\alpha||_2$.\\
\\
Let $p$ be the orthogonal projection from the extended Hilbert space $\mathscr{X}(\underline{\mathsf{C}}_1^\zeta)$ onto $\mathrm{core}(\mathscr{X}(\underline{\mathsf{C}}_1)_\bullet)$. Then we have \begin{equation} \left \langle\ov{\zeta} \frac{(I-p)  \Theta_{\underline{\mathsf{C}}_1^\zeta}(g)}{||(I-p)  \Theta_{\underline{\mathsf{C}}_1^\zeta}(g)||},  \frac{(I-p)  \Theta_{\underline{\mathsf{C}}_1^\zeta}(e)}{||(I-p)  \Theta_{\underline{\mathsf{C}}_1^\zeta}(e)||} \right \rangle = |\zeta|^2 \label{eq.latin-3} \end{equation} 

Let \[ y  = \ov{\zeta} \frac{(I-p)  \Theta_{\underline{\mathsf{C}}_1^\zeta}(g)}{||(I-p)  \Theta_{\underline{\mathsf{C}}_1^\zeta}(g)||} - \frac{(I-p)  \Theta_{\underline{\mathsf{C}}_1^\zeta}(e)}{||(I-p)  \Theta_{\underline{\mathsf{C}}_1^\zeta}(e)||}  \] If we rewrite \[ y = \sum_{h \in \mathcal{K}} \alpha(h) \Theta_{\underline{\mathsf{C}}_1^\zeta}(h) \]

then we must have \[ \alpha(e) = \frac{1}{||(I-p)  \Theta_{\underline{\mathsf{C}}_1^\zeta}(e)||} \] and \[ \alpha(g) = \frac{1}{||(I-p)  \Theta_{\underline{\mathsf{C}}_1^\zeta}(g)||} \]  so that $||\alpha||_2 \geq 1$. Moreover, we have $\tilde{A}_1\alpha = 0$ since $y$ lies in the orthogonal complement of $\mathrm{core}(\mathscr{X}(\underline{\mathsf{C}}_1)_\bullet)$ in $\mathscr{X}(\underline{\mathsf{C}}_1^\zeta)$. It follows that $|| t[\underline{\mathsf{C}}_1^\zeta, \underline{\mathsf{C}}_2^\mu] y || \geq \kappa \theta$.\\
\\
On the other hand, from (\ref{eq.latin-3}) we see $||y||^2 \leq 2-2|\zeta|^2$. Therefore $\mathfrak{e}(\underline{\mathsf{C}}_1^\zeta,\underline{\mathsf{C}}_1^\mu) \geq \kappa^2 \theta^2(2-2|\zeta|^2)^{-1}$. Since $\kappa$ and $\theta$ are determined by $\mathsf{C}_1$ and $\mathsf{C}_2$ this completes the proof of Lemma \ref{lem.singular}.

\section{Proof of Theorem \ref{thm}} \label{seg.perturb}

In Subsections \ref{seg.inin} - \ref{seg.before} of Section \ref{seg.perturb} we establish Propositions \ref{prop.nonzero} - \ref{prop.anita-3}, which will be used below in Subsections \ref{oo} and \ref{ooo} to prove Theorem \ref{thm}.

\subsection{Introducing initial data} \label{seg.inin}

Let $g \in \mathbb{F}$ and let $\mathsf{C},\mathsf{D} \in \mathrm{NSPD}_g$. Observe that for any $\zeta \in \mathbb{D}$, the space $\mathscr{X}(\mathsf{C}^\zeta)$ has a canonical basis $\{\Theta_\mathsf{C}(h):h \in \mathcal{K}_g\}$. We will adopt the convention that for $\lambda \in \mathbb{D}$ a vector $x \in \mathscr{X}(\mathsf{C}^\zeta)$ is identified with the vector in $\mathscr{X}(\mathsf{C}^\lambda)$ having the same coordinates with respect to the canonical basis.\\
\\
Let $\zeta,\mu \in \mathbb{D}$ and consider the extensions $\mathsf{C}^\zeta$ and $\mathsf{D}^\mu$. We consider an additive perturbation $\chi \varsigma$ to the parameters $\zeta$ and $\mu$ where $\varsigma \in \partial \mathbb{D}$ and $\chi \in \mathbb{R}$ is sufficiently small that $\max(|\zeta+\chi \varsigma|,|\mu+\chi \varsigma|) < 1$. It will be convenient to introduce the asymptotic notations $O(\cdot)$ and $o(\cdot)$ with respect to the limit $\chi \to 0$.\\
\\
Let $p$ be the orthogonal projection from $\mathscr{X}(\mathsf{C})_\bullet$ onto $\mathrm{core}(\mathscr{X}(\mathsf{C})_\bullet)$ and let $q$ be the orthogonal projection from $\mathscr{X}(\mathsf{D})_\bullet$ onto $\mathrm{core}(\mathscr{X}(\mathsf{D})_\bullet)$. We introduce the following notations. \begin{align} S & = \frac{(I-p)  \Theta_{\mathsf{C}}(g)}{||(I-p)  \Theta_{\mathsf{C}}(g)||} \label{eq.class-1} \\ S' &= \frac{(I-p)  \Theta_{\mathsf{C}}(e)}{||(I-p)  \Theta_{\mathsf{C}}(e)||} \label{eq.class-2} \\ T & = \frac{(I-q)  \Theta_{\mathsf{D}}(g)}{||(I-p)  \Theta_{\mathsf{D}}(g)||} \label{eq.class-3} \\ T'& = \frac{(I-q)  \Theta_{\mathsf{D}}(e)}{||(I-q)  \Theta_{\mathsf{D}}(e)||} \label{eq.class-4} \end{align}

\subsection{Energy increases require extension components}

\begin{proposition} Let $x \in \mathscr{X}(\mathsf{C})_\bullet$ and write $x = \alpha S+\alpha'S'+x'$ for $x' \in \mathrm{core}(\mathscr{X}(\mathsf{C})_\bullet)$. If $||t[\mathsf{C}^\zeta,\mathsf{D}^\mu]x||^2 > \mathfrak{e}(\mathsf{C},\mathsf{D})||x||^2$ then both $\alpha$ and $\alpha'$ are nonzero. \label{prop.nonzero} \end{proposition}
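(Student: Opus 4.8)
The plan is to argue by contrapositive: I will show that if at least one of $\alpha$ or $\alpha'$ vanishes, then $\|t[\mathsf{C}^\zeta,\mathsf{D}^\mu]x\|^2 \leq \mathfrak{e}(\mathsf{C},\mathsf{D})\|x\|^2$. The key observation is the decomposition $\mathscr{X}(\mathsf{C})_\bullet = \mathrm{core}(\mathscr{X}(\mathsf{C})_\bullet) \oplus \mathrm{span}(S) \oplus \mathrm{span}(S')$ as vector spaces (not orthogonally, since $\langle S, S'\rangle = \zeta$ need not be zero), together with the fact that $\mathscr{X}(\mathsf{C})_g = \mathrm{core}(\mathscr{X}(\mathsf{C})_\bullet) \oplus \mathrm{span}(S)$ and $\mathscr{X}(\mathsf{C})_e = \mathrm{core}(\mathscr{X}(\mathsf{C})_\bullet) \oplus \mathrm{span}(S')$ are exactly the two distinguished subspaces from Segment \ref{subsec.extension-def}. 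By Definition \ref{def.transport}, $\mathfrak{e}(\mathsf{C},\mathsf{D})$ bounds the square of the operator norm of $t[\mathsf{C},\mathsf{D}]$ restricted to each of $\mathscr{X}(\mathsf{C})_g$ and $\mathscr{X}(\mathsf{C})_e$ — and since the transport operator is defined purely on the canonical basis indexed by $\mathcal{Q}(g,d,j,k)$, its action does not see the Szeg\H{o} parameters, so $t[\mathsf{C}^\zeta,\mathsf{D}^\mu]$ agrees with $t[\mathsf{C},\mathsf{D}]$ on vectors expressed in that basis.

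First I would handle the case $\alpha' = 0$. Then $x = \alpha S + x'$ with $x' \in \mathrm{core}(\mathscr{X}(\mathsf{C})_\bullet)$, so $x \in \mathscr{X}(\mathsf{C})_g$. Hence $\|t[\mathsf{C}^\zeta,\mathsf{D}^\mu]x\|^2 = \|t[\mathsf{C},\mathsf{D}]x\|^2 \leq \mathfrak{e}(\mathsf{C},\mathsf{D})\|x\|^2$ directly from Definition \ref{def.transport}. The case $\alpha = 0$ is identical with $\mathscr{X}(\mathsf{C})_e$ in place of $\mathscr{X}(\mathsf{C})_g$ and $S'$ in place of $S$. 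The subtle point requiring care is the identification of norms across the promoted Hilbert spaces: I need $\|x\|$ computed in $\mathscr{X}(\mathsf{C}^\zeta)$ to equal $\|x\|$ computed using only the inner products available in $\mathscr{X}(\mathsf{C})_\bullet$ when $x$ lies in one of the distinguished subspaces, and likewise for $t[\mathsf{C}^\zeta,\mathsf{D}^\mu]x$ inside $\mathscr{X}(\mathsf{D}^\mu)$. This holds because a vector in $\mathscr{X}(\mathsf{C})_g$ involves only basis vectors whose pairwise inner products are all already specified before the extension, and similarly the image $t[\mathsf{C},\mathsf{D}]x$ lands in $\mathscr{X}(\mathsf{D})_g$ (the transport operator respects the index sets $\mathcal{P}$ and the pair $\{(g,j),(e,k)\}$), whose inner products are likewise determined by $\mathsf{D} \in \mathrm{NSPD}(g,d,j,k)$ independently of $\mu$.

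The main obstacle I anticipate is purely bookkeeping: making precise that the transport operator on the \emph{partial} Hilbert spaces, as in Definition \ref{def.transport}, is the restriction to $\mathscr{X}(\mathsf{C})_\bullet$ of the transport operator $t[\mathsf{C}^\zeta,\mathsf{D}^\mu]$ on the \emph{full} realization Hilbert spaces $\mathscr{X}(\mathsf{C}^\zeta) \to \mathscr{X}(\mathsf{D}^\mu)$ (which exists because both are now genuine Hilbert spaces with canonical bases indexed by the same set $\mathcal{Q}(g,d,j,k)$), and that this restriction carries $\mathscr{X}(\mathsf{C})_g$ into $\mathscr{X}(\mathsf{D})_g$ and $\mathscr{X}(\mathsf{C})_e$ into $\mathscr{X}(\mathsf{D})_e$. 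Once this is in place, the proposition is immediate. I expect the write-up to be short, with the bulk of it devoted to spelling out these identifications carefully.
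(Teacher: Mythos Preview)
Your proposal is correct and follows exactly the paper's approach: the paper's proof is the single sentence ``This is immediate from the observation that if one of $\alpha$ and $\alpha'$ is zero then $x$ lies in one of the distinguished subspaces of the partial Hilbert space $\mathscr{X}(\mathsf{C})_\bullet$.'' Your write-up simply unpacks this line, including the bookkeeping about norms being independent of $\zeta,\mu$ on the distinguished subspaces, which the paper leaves implicit.
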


\begin{proof}[Proof of Proposition \ref{prop.nonzero}] This is immediate from the observation that if one of $\alpha$ and $\alpha'$ is zero then $x$ lies in one of the distinguished subspaces of the partial Hilbert space $\mathscr{X}(\mathsf{C})_\bullet$. \end{proof}

\subsection{Energy increases give one dimensional norm achievers} \label{seg.onedim}

\begin{proposition} Suppose that $\zeta,\mu \in \mathbb{D}$ are such that \begin{equation} \label{eq.class-5} \mathfrak{e}(\mathsf{C}^\zeta,\mathsf{D}^\mu) > \mathfrak{e}(\mathsf{C},\mathsf{D}) \end{equation} Then the space of vectors which achieve the norm of $t[\mathsf{C}^\zeta,\mathsf{D}^\mu]$ is one-dimensional. \label{prop.carol} \end{proposition}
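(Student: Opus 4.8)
The plan is to reduce the statement to a short dimension count built directly on Proposition~\ref{prop.nonzero}. Write $T = t[\mathsf{C}^\zeta,\mathsf{D}^\mu]$, viewed as a linear map between the finite-dimensional Hilbert spaces $\mathscr{X}(\mathsf{C}^\zeta)$ and $\mathscr{X}(\mathsf{D}^\mu)$, and let $\mathscr{N}$ denote the set of vectors achieving its norm. As for any operator between finite-dimensional Hilbert spaces, $\mathscr{N} \cup \{0\}$ is the eigenspace of $T^\ast T$ for its largest eigenvalue, hence a nonzero linear subspace of $\mathscr{X}(\mathsf{C}^\zeta)$, and every $x \in \mathscr{N}$ satisfies $||Tx||^2 = \mathfrak{e}(\mathsf{C}^\zeta,\mathsf{D}^\mu)\,||x||^2$. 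So it suffices to show that $\mathscr{N}$ cannot contain two linearly independent vectors.

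Before doing that I would note the bookkeeping point that the underlying vector space of $\mathscr{X}(\mathsf{C}^\zeta)$, its canonical basis indexed by $\mathcal{Q}(g,d,j,k)$, and hence the vector-space decomposition of $\mathscr{X}(\mathsf{C}^\zeta)$ into $\mathrm{core}(\mathscr{X}(\mathsf{C})_\bullet)$ and the span of the vectors $S$ and $S'$ from (\ref{eq.class-1}) and (\ref{eq.class-2}), do not depend on $\zeta$; consequently every $x \in \mathscr{X}(\mathsf{C}^\zeta)$ has well-defined coordinates $\alpha,\alpha'$ along $S$ and $S'$, and Proposition~\ref{prop.nonzero} applies verbatim to such $x$.

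Now suppose toward a contradiction that $x_1,x_2 \in \mathscr{N}$ are linearly independent, and write $x_i = \alpha_i S + \alpha_i' S' + x_i'$ with $x_i' \in \mathrm{core}(\mathscr{X}(\mathsf{C})_\bullet)$. Since $x_i \in \mathscr{N}$ and the hypothesis (\ref{eq.class-5}) holds we have $||Tx_i||^2 = \mathfrak{e}(\mathsf{C}^\zeta,\mathsf{D}^\mu)\,||x_i||^2 > \mathfrak{e}(\mathsf{C},\mathsf{D})\,||x_i||^2$, so Proposition~\ref{prop.nonzero} forces $\alpha_1 \neq 0$ and $\alpha_2 \neq 0$. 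Choose $(c_1,c_2) \neq (0,0)$ with $c_1\alpha_1 + c_2\alpha_2 = 0$, which is possible because this is a single linear equation in two unknowns, and set $x = c_1 x_1 + c_2 x_2$. Then $x \in \mathscr{N}$ because $\mathscr{N}$ is a subspace, $x \neq 0$ by linear independence, and the $S$-coordinate of $x$ vanishes; so the contrapositive of Proposition~\ref{prop.nonzero} yields $||Tx||^2 \leq \mathfrak{e}(\mathsf{C},\mathsf{D})\,||x||^2$, contradicting $||Tx||^2 = \mathfrak{e}(\mathsf{C}^\zeta,\mathsf{D}^\mu)\,||x||^2 > \mathfrak{e}(\mathsf{C},\mathsf{D})\,||x||^2$. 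Hence $\mathscr{N}$ is one-dimensional.

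I do not anticipate a real obstacle: once Proposition~\ref{prop.nonzero} is available the argument is essentially formal. The only things needing care are the bookkeeping flagged in the second paragraph and confirming that $\mathfrak{e}(\mathsf{C}^\zeta,\mathsf{D}^\mu)$ is indeed the square of the operator norm of the full operator $T$ --- so that ``achieving the norm of $t[\mathsf{C}^\zeta,\mathsf{D}^\mu]$'' is literally the condition $||Tx||^2 = \mathfrak{e}(\mathsf{C}^\zeta,\mathsf{D}^\mu)\,||x||^2$ used above --- which is consistent with the conventions of Definition~\ref{def.transportop} and Definition~\ref{def.transport}.
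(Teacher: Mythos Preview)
Your proof is correct and follows essentially the same approach as the paper's: both arguments take two independent norm-achieving vectors, use Proposition~\ref{prop.nonzero} to see their $S$-coordinates are nonzero, form a linear combination with vanishing $S$-coordinate, and invoke Proposition~\ref{prop.nonzero} again for the contradiction. Your version is slightly more explicit about why the norm-achievers form a subspace, but the core idea is identical.
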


\begin{proof}[Proof of Proposition \ref{prop.carol}] Suppose toward a contradiction that $x$ and $y$ are orthogonal unit vectors in $\mathscr{X}(\mathsf{C}^\zeta)$ which achieve the norm of $t[\mathsf{C}^\zeta,\mathsf{D}^\mu]$. Write $x = \alpha S + \alpha' S' + x'$ for $x' \in \mathrm{core}(\mathscr{X}(\mathsf{C})_\bullet)$ and write $y = \beta S + \alpha'S' + y'$ for $y' \in \mathrm{core}(\mathscr{X}(\mathsf{C})_\bullet)$. Proposition \ref{prop.nonzero} and the hypothesis (\ref{eq.class-5}) implies that $\alpha \neq 0$. Consider the vector $z = \beta \alpha^{-1}x - y$. This vector $z$ also achieves the norm of $t[\mathsf{C},\mathsf{D}]$. Since $z$ has no $S$ component using Proposition \ref{prop.nonzero} we obtain a contradiction to the hypothesis (\ref{eq.class-5}). \end{proof}

\subsection{Initial bounds on coefficients and energies}

\begin{proposition} Let $x \in \mathscr{X}(\mathsf{C}^\zeta)$ and write $x = \alpha S + \alpha' S' + x'$ for $x' \in \mathrm{core}(\mathscr{X}(\mathsf{C})_\bullet)$. Then we have $|\alpha||\alpha'| \leq (1-|\zeta|)^{-1}||x||^2$. \label{prop.anita+1} \end{proposition}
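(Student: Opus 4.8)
The plan is to exploit the explicit description of the extension parameterization. Recall that in the partial Hilbert space $\mathscr{X}(\mathsf{C})_\bullet$, the vectors $S$ and $S'$ from (\ref{eq.class-1})--(\ref{eq.class-2}) are unit vectors in $\mathrm{core}(\mathscr{X}(\mathsf{C})_\bullet)^\perp$, and after extension by $\zeta$ the space $\mathscr{X}(\mathsf{C}^\zeta)$ has the orthogonal decomposition into $\mathrm{core}(\mathscr{X}(\mathsf{C})_\bullet)$ and $\mathrm{span}(S,S')$, where the latter is a two-dimensional space carrying the Gram matrix $\left(\begin{smallmatrix} 1 & \zeta \\ \overline{\zeta} & 1 \end{smallmatrix}\right)$ (this is exactly what (\ref{eq.done-1}) encodes). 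Writing $x = \alpha S + \alpha' S' + x'$ with $x' \in \mathrm{core}(\mathscr{X}(\mathsf{C})_\bullet)$, orthogonality of $x'$ against $S$ and $S'$ gives
\[
\|x\|^2 = \|x'\|^2 + |\alpha|^2 + |\alpha'|^2 + 2\,\mathrm{Re}\bigl(\overline{\alpha}\,\alpha'\,\zeta\bigr) \geq |\alpha|^2 + |\alpha'|^2 - 2|\zeta||\alpha||\alpha'|.
\]

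Now the key step is a lower bound on the quadratic form $|\alpha|^2 + |\alpha'|^2 - 2|\zeta||\alpha||\alpha'|$ in terms of $|\alpha||\alpha'|$. Since $|\alpha|^2 + |\alpha'|^2 \geq 2|\alpha||\alpha'|$ we get $\|x\|^2 \geq (2 - 2|\zeta|)|\alpha||\alpha'|$, hence $|\alpha||\alpha'| \leq \tfrac{1}{2(1-|\zeta|)}\|x\|^2 \leq \tfrac{1}{1-|\zeta|}\|x\|^2$, which is even slightly stronger than the claimed inequality. So the whole argument reduces to the Gram matrix computation plus the AM--GM estimate $|\alpha|^2+|\alpha'|^2 \geq 2|\alpha||\alpha'|$.

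I do not anticipate a genuine obstacle here; the only care needed is to make sure the cross term $2\,\mathrm{Re}(\overline{\alpha}\alpha'\zeta)$ is bounded below correctly (by $-2|\zeta||\alpha||\alpha'|$, using $|\mathrm{Re}(w)| \leq |w|$), and to confirm that the extension-by-$\zeta$ construction of Segment \ref{subsec.extension-def} really does make $S \perp x'$ and $S' \perp x'$ — this is precisely the content of the displayed orthogonal decomposition $\mathscr{X}(\underline{\mathsf{C}})_\bullet = \mathrm{core}(\mathscr{X}(\underline{\mathsf{C}})_\bullet) \oplus \mathrm{span}((I-p)\Theta_\mathsf{C}(g)_j,(I-p)\Theta_\mathsf{C}(e)_k)$ appearing right after (\ref{eq.hey-0}). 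With that decomposition in hand the proof is two or three lines.
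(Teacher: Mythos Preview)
Your proof is correct and follows essentially the same approach as the paper: both use the orthogonal decomposition $\mathscr{X}(\mathsf{C}^\zeta) = \mathrm{core}(\mathscr{X}(\mathsf{C})_\bullet) \oplus \mathrm{span}(S,S')$ together with $\langle S,S'\rangle = \zeta$ to get $\|x\|^2 \geq |\alpha|^2 + |\alpha'|^2 - 2|\zeta||\alpha||\alpha'|$, and then apply the AM--GM inequality $|\alpha|^2 + |\alpha'|^2 \geq 2|\alpha||\alpha'|$. The only cosmetic difference is that the paper first rewrites this as $\|x\|^2 \geq (1-|\zeta|)(|\alpha|^2+|\alpha'|^2)$ and then passes to the product, whereas you go straight to $\|x\|^2 \geq 2(1-|\zeta|)|\alpha||\alpha'|$; your route is slightly more direct and even yields the sharper constant $\tfrac{1}{2(1-|\zeta|)}$.
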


\begin{proof}[Proof of Proposition \ref{prop.anita+1}]  We have \begin{align} ||x||^2 &= ||\alpha S + \alpha'S' + x'||^2 \label{eq.ligro-1} \\ & \geq ||\alpha S + \alpha'S'||^2 \label{eq.ligro-2} \\ & = |\alpha|^2 + |\alpha'|^2 + 2\, \mathrm{Re}(\zeta \alpha \ov{\alpha'}) \label{eq.ligro-3} \\ & \geq |\alpha|^2 + |\alpha'|^2 - 2|\alpha||\alpha'| |\zeta| \nonumber \\ & = (1-|\zeta|)(|\alpha|^2 + |\alpha'|^2) + |\zeta|(|\alpha|^2 + |\alpha'|^2- 2|\alpha||\alpha'|) \label{eq.ligro-5} \\ & \geq (1-|\zeta|)(|\alpha|^2+|\alpha'|^2) \label{eq.ligro-6} \end{align}

Here, \begin{itemize} \item (\ref{eq.ligro-2}) follows from (\ref{eq.ligro-1}) since $\alpha S + \alpha'S' \perp x'$, \item (\ref{eq.ligro-3}) follows from (\ref{eq.ligro-2}) since by construction we have $\langle S, S' \rangle = \zeta$, \item and (\ref{eq.ligro-6}) follows from (\ref{eq.ligro-5}) since \[ |\alpha|^2+|\alpha'|^2-2|\alpha||\alpha'| = (|\alpha|-|\alpha'|)^2 \geq 0 \] \end{itemize}Therefore we have $\max(|\alpha|^2,|\alpha'|^2) \leq (1-|\zeta|)^{-1}$ and so the proof of Proposition \ref{prop.anita+1} is complete. \end{proof}

\begin{proposition} We have $\mathfrak{e}(\mathsf{C}^{\zeta+\chi \varsigma},\mathsf{D}^\mu) = \mathfrak{e}(\mathsf{C}^\zeta,\mathsf{D}^\mu) + O(\chi)$ and $\mathfrak{e}(\mathsf{C}^\zeta,\mathsf{D}^{\mu+\chi \varsigma}) = \mathfrak{e}(\mathsf{C}^\zeta,\mathsf{D}^\mu) + O(\chi)$. \label{prop.anita-0} \end{proposition}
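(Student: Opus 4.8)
The plan is to show that the relative energy $\mathfrak{e}(\mathsf{C}^\zeta,\mathsf{D}^\mu)$ is Lipschitz in each Szeg\"{o} parameter on compact subsets of $\mathbb{D}$, which gives the stated $O(\chi)$ bound. The key observation is that $\mathfrak{e}(\mathsf{C}^\zeta,\mathsf{D}^\mu)$ is (the maximum over the two distinguished subspaces of) the squared operator norm of the transport operator $t[\mathsf{C}^\zeta,\mathsf{D}^\mu]$, and by the conventions of Segment \ref{seg.inin} the transport operator is realized on a fixed vector space $\mathscr{X}(\mathsf{C})_\bullet$ (resp.\ $\mathscr{X}(\mathsf{D})_\bullet$) equipped with inner products that depend on $\zeta$ (resp.\ $\mu$). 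So the whole problem reduces to: how do the Gram matrices of the canonical bases of $\mathscr{X}(\mathsf{C}^\zeta)$ and $\mathscr{X}(\mathsf{D}^\mu)$ depend on $\zeta$ and $\mu$?

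First I would write down the Gram matrix $G(\zeta)$ of the canonical basis $\{\Theta_{\mathsf{C}}(h)_m : (h,m)\in\mathcal{Q}(g,d,j,k)\}$ inside $\mathscr{X}(\mathsf{C}^\zeta)$. All of its entries except the $(g,j),(e,k)$ entry are independent of $\zeta$; that remaining entry is $\mathsf{C}(g)_{j,k}$, which by (\ref{eq.hey-0}) equals $\zeta\,\|(I-p)\Theta_{\mathsf{C}}(g)_j\|\,\|(I-p)\Theta_{\mathsf{C}}(e)_k\| + \langle p\Theta_{\mathsf{C}}(g)_j, p\Theta_{\mathsf{C}}(e)_k\rangle$, an affine function of $\zeta$ with coefficients determined by $\mathsf{C}$ alone. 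Hence $G(\zeta)$ is an affine (in particular real-analytic) matrix-valued function of $\zeta$, and likewise the Gram matrix of $\mathscr{X}(\mathsf{D}^\mu)$ is affine in $\mu$. The transport operator $t[\mathsf{C}^\zeta,\mathsf{D}^\mu]$, restricted to $\mathscr{X}(\mathsf{C})_g$ or $\mathscr{X}(\mathsf{C})_e$, then has $\mathfrak{e}$ computed from these Gram matrices exactly as in Proposition \ref{prop.matrix}: one passes through a square-root factorization of each Gram matrix (continuous in the matrix, by Section $\mathrm{II}.6.2$ of \cite{MR1335452}) to get honest Euclidean realizations $L(\zeta)$ and $M(\mu)$, and $\mathfrak{e}$ is the squared operator norm of $M(\mu)L(\zeta)^{-1}$ on the relevant coordinate subspace.

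Now I would invoke that on any compact subset of $\mathbb{D}\times\mathbb{D}$ — and $\zeta,\mu,\zeta+\chi\varsigma,\mu+\chi\varsigma$ all lie in such a compact set for $\chi$ small — the map $(\zeta,\mu)\mapsto t[\mathsf{C}^\zeta,\mathsf{D}^\mu]$ is continuous with locally bounded derivative: the Gram matrices are affine hence Lipschitz, the square-root factorization is locally Lipschitz on the (compact) set of Gram matrices that arise (they stay strictly positive definite since $|\zeta|,|\mu|<1$), matrix inversion is locally Lipschitz away from singularity, and operator norm is $1$-Lipschitz. Composing, $(\zeta,\mu)\mapsto\mathfrak{e}(\mathsf{C}^\zeta,\mathsf{D}^\mu)$ is locally Lipschitz, so perturbing $\zeta$ by $\chi\varsigma$ (with $\mu$ fixed) changes $\mathfrak{e}$ by $O(\chi)$, and symmetrically for perturbing $\mu$. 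That is exactly the two claimed identities.

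The step I expect to require the most care is the uniform non-degeneracy needed to control $L(\zeta)^{-1}$: one must check that as $\zeta$ ranges over a fixed compact subset of $\mathbb{D}$, the realizations $L(\zeta)$ stay uniformly invertible, equivalently that the Gram matrix $G(\zeta)$ stays uniformly bounded away from being singular. This is where strict positive definiteness of $\mathsf{C}^\zeta$ for $|\zeta|<1$ (noted in Segment \ref{subsec.done} and in the discussion after (\ref{eq.done-1})) is used, together with the fact that a continuous matrix-valued function on a compact set attains its minimal smallest eigenvalue. Once that uniform bound is in hand, the rest is the routine Lipschitz-composition argument sketched above, and in fact one could alternatively phrase the whole thing as a direct application of Proposition \ref{prop.matrix} with $L = L(\zeta)$, $M = L(\zeta+\chi\varsigma)$, noting $\|L(\zeta)^\ast L(\zeta) - L(\zeta+\chi\varsigma)^\ast L(\zeta+\chi\varsigma)\|_1 = O(\chi)$ from affineness of the Gram matrix.
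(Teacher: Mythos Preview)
Your argument is correct, but the route is genuinely different from the paper's. You argue abstractly: the Gram matrices $G(\zeta)$ and $H(\mu)$ are affine in their parameters, square-root factorization and inversion are locally Lipschitz on compact subsets of the strictly positive definite cone, and the operator norm is $1$-Lipschitz, so the composition $(\zeta,\mu)\mapsto\mathfrak{e}(\mathsf{C}^\zeta,\mathsf{D}^\mu)$ is locally Lipschitz. The paper instead works directly with a unit vector $x_\chi$ achieving the norm of $t[\mathsf{C}^{\zeta+\chi\varsigma},\mathsf{D}^\mu]$, decomposes it as $\alpha_\chi S+\alpha'_\chi S'+x'_\chi$, and uses the explicit identity $\|x_\chi\|_0^2=\|x_\chi\|_\chi^2-2\chi\,\mathrm{Re}(\varsigma\alpha_\chi\ov{\alpha'_\chi})$ together with Proposition~\ref{prop.anita+1} (which bounds $|\alpha_\chi||\alpha'_\chi|$ by $(1-|\zeta+\chi\varsigma|)^{-1}=O(1)$) to compare the energies directly.

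Your approach is cleaner and more self-contained for this proposition in isolation. The paper's hands-on computation, however, is doing double duty: the same decomposition and the same norm-change identity reappear immediately in Proposition~\ref{prop.anita-2} to compute the actual derivative $-2\,\mathfrak{e}(\mathsf{C}^\zeta,\mathsf{D}^\mu)\,\mathrm{Re}(\varsigma\alpha\ov{\alpha'})$, which is the quantity driving the minimization argument in Segments~\ref{segment.weak} and~\ref{segment.cycle}. Your Lipschitz argument proves continuity but does not isolate this first-order term, so if you proceed this way you will still need to revisit the explicit computation when you reach Proposition~\ref{prop.anita-2}. One minor point: after extension $\mathscr{X}(\mathsf{C}^\zeta)$ is a full Hilbert space, so there is no need to speak of restricting to the distinguished subspaces $\mathscr{X}(\mathsf{C})_g$, $\mathscr{X}(\mathsf{C})_e$ here; the relevant $\mathfrak{e}$ is simply the squared operator norm on the whole space.
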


\begin{proof}[Proof of Proposition \ref{prop.anita-0}] Let $x_\chi \in \mathscr{X}(\mathsf{C}^{\zeta+\chi \varsigma})$ be a unit vector which achieves the norm of $t[\mathsf{C}^{\zeta+\chi \varsigma},\mathsf{D}^\mu]$. Write $||\cdot||_\chi$ for the norm on $\mathscr{X}(\mathsf{C}^{\zeta+\chi \varsigma})$ and write $x_\chi = \alpha_\chi S + \alpha_\chi' S' + x'_\chi$ for $x'_\chi \in \mathrm{core}(\mathscr{X}(\mathsf{C})_\bullet)$. We have $(1-|\zeta+\chi \varsigma|)^{-1} = O(1)$, so that Proposition \ref{prop.anita+1} implies $|\alpha_\chi||\alpha'_\chi| = O(1)$. We have \begin{align} ||x_\chi||^2_0 & = ||x_\chi||^2_\chi - 2\chi\,\mathrm{Re}(\varsigma \alpha_\chi \ov{\alpha'_\chi}) \label{eq.done-26} \\ & = 1 - 2\chi\,\mathrm{Re}(\varsigma \alpha_\chi \ov{\alpha_\chi'}) \label{eq.done-27} \\ & \leq 1+ 2 \chi|\alpha_\chi||\alpha_\chi'| \nonumber \\ & \leq 1+O(\chi) \label{eq.done-25} \end{align} Here, (\ref{eq.done-27}) follows from (\ref{eq.done-26}) since we assumed that $x_\chi$ was a unit vector in $\mathscr{X}(\mathsf{C}^{\zeta+\chi \varsigma})$. Therefore we have \begin{align} \mathfrak{e}(\mathsf{C}^\zeta,\mathsf{D}^\mu) & \geq ||t[\mathsf{C}^\zeta,\mathsf{D}^\mu] x_\chi||^2\,||x_\chi||_0^{-2} \label{eq.starb-50} \\ & = ||t[\mathsf{C}^{\zeta+\chi \varsigma},\mathsf{D}^\mu] x_\chi||^2\,||x_\chi||_0^{-2} \label{eq.starb-51} \\ & = \mathfrak{e}(\mathsf{C}^{\zeta+\chi \varsigma},\mathsf{D}^\mu)||x_\chi||_0^{-2} \label{eq.starb-52} \\ & \geq \mathfrak{e}(\mathsf{C}^{\zeta+\chi \varsigma},\mathsf{D}^\mu)(1+O(\chi))^{-1} \label{eq.starb-53} \\ & = \mathfrak{e}(\mathsf{C}^{\zeta+\chi \varsigma},\mathsf{D}^\mu)(1-O(\chi)) \label{eq.starb-54} \\ & = \mathfrak{e}(\mathsf{C}^{\zeta+\chi\varsigma},\mathsf{D}^\mu)-O(\chi) \label{eq.starb-55} \end{align} 

Here, \begin{itemize} \item (\ref{eq.starb-51}) follows from (\ref{eq.starb-50}) since $||t[\mathsf{C}^{\zeta+\chi \varsigma},\mathsf{D}^\mu] x_\chi||$ is computed in $\mathscr{X}(\mathsf{D}^\mu)$ and hence does not depend on $\chi$, \item (\ref{eq.starb-52}) follows from (\ref{eq.starb-51}) since we assumed $x_\chi$ is a unit vector achieving the norm of $t[\mathsf{C}^{\zeta+\chi \varsigma},\mathsf{D}^\mu]$, \item and (\ref{eq.starb-53}) follows from (\ref{eq.starb-52}) by (\ref{eq.done-25}) \item (\ref{eq.starb-55}) follows from (\ref{eq.starb-54}) since Proposition \ref{prop.anita+1} implies $\mathfrak{e}(\mathsf{C}^{\zeta+\chi \varsigma},\mathsf{D}^\mu) = O(1)$. \end{itemize}

Now, let $x_\chi \in \mathscr{X}(\mathsf{C}^\zeta)$ be a unit vector which achieves the norm of $t[\mathsf{C}^\zeta,\mathsf{D}^{\mu+\chi \varsigma}]$. We modify the notation $||\cdot||_\chi$ to now refer to the norm of $\mathscr{X}(\mathsf{D}^{\mu+\chi \varsigma})$ and write $t[\mathsf{C}^\zeta,\mathsf{D}^{\mu+\chi \varsigma}] x_\chi = \beta_\chi T+ \beta_\chi ' T' + x'_\chi$ for $x' \in \mathrm{core}(\mathscr{X}(\mathsf{D})_\bullet)$. We have

\begin{align} \mathfrak{e}(\mathsf{C}^\zeta,\mathsf{D}^{\mu+\chi \varsigma}) &= ||t[\mathsf{C}^\zeta,\mathsf{D}^{\mu+\chi \varsigma}] x_\chi||_\chi^2 \nonumber \\ & = ||t[\mathsf{C}^\zeta,\mathsf{D}^{\mu+\chi \varsigma}] x_\chi||_0^2 + 2\chi\, \mathrm{Re}(\varsigma \beta_\chi \ov{\beta'_\chi}) \label{eq.rumba-1} \\ & \leq \mathfrak{e}(\mathsf{C}^\zeta,\mathsf{D}^\mu) + 2 \chi \, \mathrm{Re}(\varsigma \beta_\chi \ov{\beta'_\chi}) + O(\chi) \label{eq.rumba-2} \\ & \leq \mathfrak{e}(\mathsf{C}^\zeta,\mathsf{D}^\mu) + O(\chi)  \label{eq.rumba-3} \end{align}

Here, (\ref{eq.rumba-2}) follows from (\ref{eq.rumba-1}) since $x_\chi$ is a unit vector in $\mathscr{X}(\mathsf{C}^\zeta)$ and (\ref{eq.rumba-3}) follows from (\ref{eq.rumba-2}) since $||t[\mathsf{C}^\zeta,\mathsf{D}^{\mu + \varsigma \chi}]x_\chi|| = O(1)$ and therefore Proposition \ref{prop.anita+1} shows $\max(|\beta_\chi|,|\beta'_\chi|) = O(1)$. \end{proof}

\subsection{Differentiability of the energy}

\begin{proposition} Suppose the space of vectors which achieve the norm of $t[\mathsf{C},\mathsf{D}]$ is one dimensional and let $\varsigma \in \partial \mathbb{D}$. Then for sufficiently small $\chi$ the quantity $\mathfrak{e}(\mathsf{C}^{\zeta+\chi \varsigma},\mathsf{D}^\mu)$ is a differentiable function of $\chi$. Moreover, for every such $\chi$ there is a vector $x_\chi \in \mathscr{X}(\mathsf{C}^{\zeta+\chi \varsigma})$ which achieves the norm of $t[\mathsf{C}^{\zeta+\chi \varsigma},\mathsf{D}^\mu]$ and such that the coordinates of $x_\chi$ with respect to the canonical basis are differentiable functions of $\chi$. \\
\\
Similarly, for sufficiently small $\chi$ the quantity $\mathfrak{e}(\mathsf{C}^\zeta,\mathsf{D}^{\mu+\chi \varsigma})$ is a differentiable function of $\chi$. Moreover, for every such $\chi$ there is a vector $y_\chi \in \mathscr{X}(\mathsf{D}^{\mu+\chi \varsigma})$ which is the image of a vector achieving the norm of $t[\mathsf{C}^\zeta,\mathsf{D}^{\mu+\chi \varsigma}]$ and such that the coordinates of $y_\chi$ with respect to the canonical basis are differentiable functions of $\chi$. We may assume that $y_0$ is the image of a unit vector. \label{prop.anita-1}  \end{proposition}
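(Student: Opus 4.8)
The plan is to reduce the statement to the classical perturbation theory of a simple eigenvalue of a real-analytic Hermitian family. The first step is to pass to coordinates. By the discussion around (\ref{eq.lewis-1}), for each $\lambda \in \mathbb{D}$ the space $\mathscr{X}(\mathsf{C}^\lambda)$ is the coordinate space $\mathbb{C}^{\mathcal{Q}(g,d,j,k)}$ equipped with the inner product whose Gram matrix in the canonical basis I will call $G_{\mathsf{C}}(\lambda)$; by (\ref{eq.hey-0}) the only entries of $G_{\mathsf{C}}(\lambda)$ that depend on $\lambda$ are those between $\Theta_\mathsf{C}(g)_j$ and $\Theta_\mathsf{C}(e)_k$, and they depend affinely on $\lambda$. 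Hence $G_{\mathsf{C}}$ is a Hermitian-matrix-valued affine, in particular real-analytic, function of $\lambda$, positive definite for $|\lambda| < 1$, and likewise for the Gram matrix $G_{\mathsf{D}}(\nu)$ of $\mathscr{X}(\mathsf{D}^\nu)$. Since the transport operator sends each canonical basis vector to the canonical basis vector of the same index, it is the identity in canonical coordinates, so
\[
\mathfrak{e}(\mathsf{C}^{\zeta+\chi \varsigma},\mathsf{D}^\mu) = \|t[\mathsf{C}^{\zeta+\chi \varsigma},\mathsf{D}^\mu]\|^2 = \sup_{\xi \neq 0}\frac{\xi^\ast G_{\mathsf{D}}(\mu)\xi}{\xi^\ast G_{\mathsf{C}}(\zeta+\chi \varsigma)\xi}.
\]
Writing $G_{\mathsf{C}}(\zeta+\chi \varsigma) = R(\chi)^\ast R(\chi)$ with $R(\chi)$ the positive square root, which is real-analytic in $\chi$ on the interval where $|\zeta+\chi \varsigma| < 1$ because square root and inversion of a positive definite matrix are analytic in its entries and $\chi \mapsto G_{\mathsf{C}}(\zeta+\chi \varsigma)$ is affine, the supremum equals $\lambda_{\max}(A(\chi))$ for the real-analytic family of Hermitian positive matrices $A(\chi) := R(\chi)^{-\ast}G_{\mathsf{D}}(\mu)R(\chi)^{-1}$, and $\xi$ achieves the norm of $t[\mathsf{C}^{\zeta+\chi \varsigma},\mathsf{D}^\mu]$ exactly when $\xi = R(\chi)^{-1}\eta$ for a top eigenvector $\eta$ of $A(\chi)$.

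Next I would use the hypothesis. By Proposition~\ref{prop.carol} it holds for instance whenever $\mathfrak{e}(\mathsf{C}^\zeta,\mathsf{D}^\mu) > \mathfrak{e}(\mathsf{C},\mathsf{D})$, and in general it says that the space of norm-achievers of the transport operator at $\chi = 0$ is one-dimensional, which under the dictionary above is exactly the assertion that $\lambda_{\max}(A(0))$ is a \emph{simple} eigenvalue of $A(0)$. Since $|\zeta| < 1$ and the spectrum of $A(\chi)$ varies continuously, there is $\delta > 0$ so that for $|\chi| < \delta$ we have $|\zeta + \chi \varsigma| < 1$ and $\lambda_{\max}(A(\chi))$ remains a simple eigenvalue of $A(\chi)$, separated from the rest of the spectrum by a fixed gap.

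On $(-\delta,\delta)$ I would then invoke Rellich's theorem on analytic perturbation of a simple eigenvalue of an analytic Hermitian family; see also Chapter~II of \cite{MR1335452}. Concretely, letting $\Gamma$ be a small circle enclosing $\lambda_{\max}(A(0))$ and no other eigenvalue of $A(0)$, the Riesz projection $P(\chi) = \frac{1}{2\pi i}\oint_\Gamma (z-A(\chi))^{-1}\,dz$ is a real-analytic, rank-one, self-adjoint matrix-valued function of $\chi$ for $|\chi| < \delta$; hence $\lambda(\chi) := \operatorname{tr}(A(\chi)P(\chi))$ is real-analytic with $\lambda(\chi) = \lambda_{\max}(A(\chi)) = \mathfrak{e}(\mathsf{C}^{\zeta+\chi \varsigma},\mathsf{D}^\mu)$, which gives the asserted differentiability of the energy. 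Fixing a unit top eigenvector $v_0$ of $A(0)$, the vector $v(\chi) := P(\chi)v_0/\|P(\chi)v_0\|$ is a real-analytic choice of unit top eigenvector of $A(\chi)$ (the denominator is real-analytic and equals $1$ at $\chi=0$, hence is nonzero near $0$), and $x_\chi := R(\chi)^{-1}v(\chi)$ is a norm-achieving vector of $t[\mathsf{C}^{\zeta+\chi \varsigma},\mathsf{D}^\mu]$ whose canonical coordinates $R(\chi)^{-1}v(\chi)$ are real-analytic in $\chi$; moreover $x_\chi$ is automatically a unit vector of $\mathscr{X}(\mathsf{C}^{\zeta+\chi \varsigma})$ since $x_\chi^\ast G_{\mathsf{C}}(\zeta+\chi \varsigma)x_\chi = v(\chi)^\ast v(\chi) = 1$. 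This proves the first assertion.

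The second assertion is the same argument with the roles of $\mathsf{C}$ and $\mathsf{D}$ exchanged: now $G_{\mathsf{C}}(\zeta) = R^\ast R$ is fixed, the relevant family is $A'(\chi) := R^{-\ast}G_{\mathsf{D}}(\mu+\chi \varsigma)R^{-1}$, which still satisfies $A'(0) = A(0)$ so the simplicity hypothesis applies verbatim, and one obtains a real-analytic unit top eigenvector $v'(\chi)$; the vector $x_\chi := R^{-1}v'(\chi)$ is a unit norm-achiever of $t[\mathsf{C}^\zeta,\mathsf{D}^{\mu+\chi \varsigma}]$, and $y_\chi := t[\mathsf{C}^\zeta,\mathsf{D}^{\mu+\chi \varsigma}]x_\chi$, having the same canonical coordinates $R^{-1}v'(\chi)$ as $x_\chi$ because the transport is the identity in coordinates, has real-analytic, hence differentiable, coordinates, with $y_0$ the image of the unit vector $x_0$. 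The one genuine subtlety here is the reduction in the first paragraph: it is what turns the energy into an honest top-eigenvalue problem for an analytically varying Hermitian matrix, so that simple-eigenvalue perturbation theory applies, and it is precisely the one-dimensionality hypothesis that excludes the non-differentiable conical behavior a repeated top eigenvalue would permit.
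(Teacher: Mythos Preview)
Your proof is correct and follows essentially the same approach as the paper: reduce the energy to the top eigenvalue of a Hermitian matrix family varying smoothly in $\chi$, then invoke simple-eigenvalue perturbation theory (the paper cites Kato's Theorem~6.1 in \cite{MR1335452}, you use Riesz projections). The only cosmetic difference is that the paper orthonormalizes via Gram--Schmidt, using Proposition~\ref{prop.star} together with a phase trick to keep the perturbation real, whereas you take the positive square root of the Gram matrix; your route is slightly cleaner and yields real-analyticity rather than mere differentiability, but the substance is the same.
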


\begin{proof}[Proof of Proposition \ref{prop.anita-1}] We establish Proposition \ref{prop.anita-1} for perturbations of $\zeta$. The case of perturbations of $\mu$ can be established using a similar method. Let $B_\chi$ be the orthonormal basis for $\mathscr{X}(\mathsf{C}^{\zeta+\chi \varsigma})$ obtained by applying the Gram-Schmidt orthonormalization procedure to the basis \begin{equation} \label{eq.starb-4} \{\Theta_{\mathsf{C}^{\zeta+\chi \varsigma}}(h):h \in \mathcal{K}_g \setminus \{g\}\} \cup \{\ov{\varsigma} \Theta_{\mathsf{C}^{\zeta+\chi \varsigma}}(g) \} \end{equation} Let $\mathcal{N}_\chi$ be the matrix which changes coordinates from the basis in (\ref{eq.starb-4}) to $B_\chi$. Since we have introduced the phase $\ov{\varsigma}$ to the last vector in (\ref{eq.starb-4}), the matrix $\mathcal{N}_\chi$ is a real perturbation of the matrix $\mathcal{N}_0$.\ \\
\\
Also let $D$ be the orthonormal basis for $\mathscr{X}(\mathsf{D}^\mu)$ obtained by applying the Gram-Schmidt orthonormalization procedure to the basis $\{\Theta_{\mathsf{D}^\mu}(h): h \in \mathcal{K}_g \}$ and let $\mathcal{O}$ be the matrix which changes coordinates from the basis in (\ref{eq.starb-4}) to $D$. With respect to the bases $B_\chi$ and $D$ the matrix of the transport operator $t[\mathsf{C}^{\zeta + \chi \varsigma},\mathsf{D}^\mu]$ is given by $\mathcal{O} \mathcal{N}_\chi^{-1}$. The advantage of writing the matrix with respect to these orthonormal bases is that the matrix of the adjoint $t[\mathsf{C}^{\zeta+\chi \varsigma},\mathsf{D}^\mu]^\ast$ is given by the conjugate transpose of the matrix $\mathcal{O} \mathcal{N}_\chi^{-1}$, which we denote simply by $(\mathcal{O} \mathcal{N}_\chi^{-1})^\ast$. Let $\xi_\chi = (\mathcal{O} \mathcal{N}_\chi^{-1})^\ast \mathcal{O} \mathcal{N}_\chi^{-1}$\\
\\
The second clause in \ref{prop.star} implies the entries of $\xi_\chi$ are differentiable functions of $\chi$. Therefore Theorem 6.1 in \cite{MR1335452} implies that there differentiable functions $\kappa_1(\chi),\ldots,\kappa_n(\chi)$ which represent the eigenvalues of $\xi_\chi$ for sufficiently small values of $\chi$. We may assume that $\kappa_m(0) \geq \kappa_{m+1}(0)$ for all $m \in [n-1]$. Since $\kappa_1(0) > \kappa_2(0)$, for all sufficiently small $\chi$ the function $\kappa_1(\chi)$ represents the norm of $\xi_\chi$. This complete the proof of the first claim in Proposition \ref{prop.anita-1}. \\
\\
Recall that the index set for the matrix $\xi_\chi$ is $\mathcal{K}_g$. Theorem 6.1 in \cite{MR1335452} also implies that for each $\chi$ there are vectors $\varphi_1(\chi),\ldots,\varphi_n(\chi) \in \ell^2(\mathcal{K}_g)$ such that $\varphi_m(\chi)$ is an eigenvector of $\xi_\chi$ with eigenvalue $\kappa_m(\chi)$ and such that the coordinates of $\varphi_m(\chi)$ are differentiable functions of $\chi$. These numerical vectors represent the coordinates of the singular vectors for $t[\mathsf{C}^{\zeta+\chi \varsigma},\mathsf{D}^\mu]$ in the basis $B_\chi$. In order to change the coordinates back to the basis in (\ref{eq.starb-4}) we need to multiply $\varphi_m(\chi)$ by $\mathcal{N}_\chi^{-1}$. Since the entries of $\mathcal{N}_\chi^{-1}$ are differentiable functions of $\chi$ the second claim in Proposition \ref{prop.anita-1} follows. \end{proof}

\subsection{Calculation of derivatives}

\begin{proposition} \label{prop.anita-2} Suppose that for all sufficiently small $\chi$ the space of vectors which achieve the norm of $t[\mathsf{C}^{\zeta+\varsigma \chi},\mathsf{D}^\mu]$ is one-dimensional. Let $x \in \mathscr{X}(\mathsf{C}^\zeta)$ be a unit vector which achieves the norm of $t[\mathsf{C}^{\zeta},\mathsf{D}^\mu]$ and write $x = \alpha S + \alpha'S' + x'$ for $x' \in \mathrm{core}(\mathscr{X}(\mathsf{C})_\bullet)$. Then we have \[ \frac{\dee}{\dee \chi} \mathfrak{e}(\mathsf{C}^{\zeta+\varsigma \chi},\mathsf{D}^\mu) \bigg \vert_{\chi = 0} = -2\, \mathfrak{e}(\mathsf{C}^\zeta,\mathsf{D}^\mu) \,\mathrm{Re}(\varsigma \alpha \ov{\alpha'}) \] Also let $y \in \mathscr{X}(\mathsf{D}^\mu)$ be given by $y = t[\mathsf{C}^\zeta,\mathsf{D}^\mu] x$ and write $y = \beta T + \beta' T' + y'$ for $y' \in \mathscr{X}(\mathsf{D})$. Then we have \[ \frac{\dee}{\dee \chi} \mathfrak{e}(\mathsf{C}^\zeta,\mathsf{D}^{\mu + \varsigma \chi}) \bigg \vert_{\chi = 0} = 2\, \mathrm{Re}(\varsigma \beta \ov{\beta'} ) \] \end{proposition}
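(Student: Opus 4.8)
The plan is to exploit the fact that, under the identification of canonical bases of Segment \ref{seg.inin}, the transport operator $t[\mathsf{C}^{\lambda},\mathsf{D}^{\nu}]$ does not depend on the extension parameters $\lambda,\nu$, while replacing $\zeta$ by $\zeta+\varsigma\chi$ changes only the one Gram matrix entry $\langle S,S'\rangle$, which equals $\zeta$ by (\ref{eq.done-1}), to the value $\zeta+\varsigma\chi$. The first step is to record the resulting norm identities. If $v=\alpha S+\alpha'S'+v'$ with $v'\in\mathrm{core}(\mathscr{X}(\mathsf{C})_\bullet)$, then $v'\perp S$ and $v'\perp S'$, since $S,S'$ are unit multiples of $(I-p)\Theta_{\mathsf{C}}(g)_{j}$ and $(I-p)\Theta_{\mathsf{C}}(e)_{k}$; as $\chi$ is real this yields
\[ \|v\|^2_{\mathscr{X}(\mathsf{C}^{\zeta+\varsigma\chi})}=\|v\|^2_{\mathscr{X}(\mathsf{C}^{\zeta})}+2\chi\,\mathrm{Re}(\varsigma\alpha\overline{\alpha'}). \]
Symmetrically, if $w=\beta T+\beta'T'+w'$ with $w'\in\mathrm{core}(\mathscr{X}(\mathsf{D})_\bullet)$, then $\|w\|^2_{\mathscr{X}(\mathsf{D}^{\mu+\varsigma\chi})}=\|w\|^2_{\mathscr{X}(\mathsf{D}^{\mu})}+2\chi\,\mathrm{Re}(\varsigma\beta\overline{\beta'})$.

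For the first formula I would argue by an envelope computation. Since $t[\mathsf{C}^\zeta,\mathsf{D}^\mu]$ is invertible (it carries one linearly independent canonical basis to another), for every sufficiently small $\chi$ we may write
\[ \mathfrak{e}(\mathsf{C}^{\zeta+\varsigma\chi},\mathsf{D}^\mu)=\sup_{v\neq 0}\frac{\|t[\mathsf{C}^\zeta,\mathsf{D}^\mu]\,v\|^2_{\mathscr{X}(\mathsf{D}^\mu)}}{\|v\|^2_{\mathscr{X}(\mathsf{C}^{\zeta+\varsigma\chi})}}, \]
using independence of the transport operator from the extension parameter. The given unit vector $x$ attains this supremum at $\chi=0$, so the function obtained by subtracting the quotient evaluated at $v=x$ from $\mathfrak{e}(\mathsf{C}^{\zeta+\varsigma\chi},\mathsf{D}^\mu)$ is nonnegative and vanishes at $\chi=0$; since Proposition \ref{prop.anita-1} makes $\mathfrak{e}(\mathsf{C}^{\zeta+\varsigma\chi},\mathsf{D}^\mu)$ differentiable near $\chi=0$ and the quotient is manifestly differentiable there, this function is differentiable near $\chi=0$, so its derivative vanishes at $\chi=0$. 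Hence $\frac{\dee}{\dee\chi}\mathfrak{e}(\mathsf{C}^{\zeta+\varsigma\chi},\mathsf{D}^\mu)\big|_{\chi=0}$ equals the derivative at $\chi=0$ of the quotient at $v=x$. As $x$ is a unit norm achiever, $\|t[\mathsf{C}^\zeta,\mathsf{D}^\mu]x\|^2=\mathfrak{e}(\mathsf{C}^\zeta,\mathsf{D}^\mu)$ is constant in $\chi$, while the first step gives $\|x\|^2_{\mathscr{X}(\mathsf{C}^{\zeta+\varsigma\chi})}=1+2\chi\,\mathrm{Re}(\varsigma\alpha\overline{\alpha'})$; differentiating the quotient gives $-2\,\mathfrak{e}(\mathsf{C}^\zeta,\mathsf{D}^\mu)\,\mathrm{Re}(\varsigma\alpha\overline{\alpha'})$.

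For the second formula I would run the same argument with the domain norm fixed and the target norm perturbed, writing
\[ \mathfrak{e}(\mathsf{C}^\zeta,\mathsf{D}^{\mu+\varsigma\chi})=\sup_{v\neq 0}\frac{\|t[\mathsf{C}^\zeta,\mathsf{D}^\mu]\,v\|^2_{\mathscr{X}(\mathsf{D}^{\mu+\varsigma\chi})}}{\|v\|^2_{\mathscr{X}(\mathsf{C}^{\zeta})}}, \]
so that $x$ again attains the supremum at $\chi=0$. Since the hypothesis of the proposition at $\chi=0$ makes the norm of $t[\mathsf{C}^\zeta,\mathsf{D}^\mu]$ uniquely achieved, the same holds for $t[\mathsf{C}^\zeta,\mathsf{D}^{\mu+\varsigma\chi}]$ for small $\chi$ by continuity, so the second differentiability clause of Proposition \ref{prop.anita-1} applies and the envelope reasoning shows that $\frac{\dee}{\dee\chi}\mathfrak{e}(\mathsf{C}^\zeta,\mathsf{D}^{\mu+\varsigma\chi})\big|_{\chi=0}$ equals the derivative at $\chi=0$ of $\|t[\mathsf{C}^\zeta,\mathsf{D}^\mu]x\|^2_{\mathscr{X}(\mathsf{D}^{\mu+\varsigma\chi})}$, the denominator $\|x\|^2_{\mathscr{X}(\mathsf{C}^{\zeta})}=1$ being constant. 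Applying the target norm identity of the first step to $y=t[\mathsf{C}^\zeta,\mathsf{D}^\mu]x=\beta T+\beta'T'+y'$ shows this derivative is $2\,\mathrm{Re}(\varsigma\beta\overline{\beta'})$.

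The delicate point is the envelope step — legitimately pulling the derivative past the supremum at a parameter value where a maximizer is available — and this is where the one-dimensionality hypothesis enters, by way of Proposition \ref{prop.anita-1} supplying the differentiability of $\mathfrak{e}$; after that one only needs the elementary fact that a nonnegative differentiable function vanishing at an interior point has zero derivative there. Morally this is the Hellmann--Feynman formula for the top eigenvalue of a smoothly varying positive operator, legitimate here precisely because that eigenvalue is simple. Everything else is the bookkeeping of the first step, which is immediate from the construction in Segment \ref{subsec.extension-def} and the definitions (\ref{eq.class-1})--(\ref{eq.class-4}).
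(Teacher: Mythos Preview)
Your proof is correct and, in substance, coincides with the paper's argument: both rely on Proposition \ref{prop.anita-1} for differentiability of $\mathfrak{e}$, on the norm identity $\|v\|_\chi^2=\|v\|_0^2+2\chi\,\mathrm{Re}(\varsigma\alpha\overline{\alpha'})$, and on the fixed maximizer $x$ to pin down the value of the derivative. The packaging differs slightly. The paper computes an explicit upper bound for $\mathfrak{e}(\mathsf{C}^{\zeta+\varsigma\chi},\mathsf{D}^\mu)$ by tracking the differentiable maximizer family $x_\chi$ supplied by Proposition \ref{prop.anita-1}, and a matching lower bound by plugging in the fixed $x$; the two bounds sandwich the derivative. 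You instead use Proposition \ref{prop.anita-1} only as a black box for differentiability of $\mathfrak{e}$, then apply Fermat's theorem to the nonnegative difference $\mathfrak{e}(\chi)-q(\chi)$, which vanishes at $\chi=0$; this is the Hellmann--Feynman/envelope formulation you mention, and it sidesteps the need to follow $x_\chi$ explicitly. Your route is a bit shorter, at the cost of leaning on differentiability of $\mathfrak{e}$ as a finished fact rather than re-deriving it in the course of the sandwich; since that fact is exactly what Proposition \ref{prop.anita-1} provides, there is no loss.
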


\begin{proof}[Proof of Proposition \ref{prop.anita-2}] Let $x_\chi \in \mathscr{X}(\mathsf{C}^{\zeta+\chi \varsigma})$ be as in Proposition \ref{prop.anita-1}. Write $x_\chi = \alpha_\chi S + \alpha'_\chi S' + x'_\chi$ for $x'_\chi \in \mathrm{core}(\mathscr{X}(\mathsf{C})_\bullet)$.\\
\\
In order to distinguish between the norms on different spaces, we will write $||\cdot||_\chi$ for the norm on $\mathscr{X}(\mathsf{C}^{\zeta+\chi \varsigma})$. We have \begin{align} ||x_\chi||^2_\chi & = ||x_\chi||^2_0 + 2\chi\, \mathrm{Re}(\varsigma \alpha_\chi \ov{\alpha'_\chi}) \nonumber \\ & = ||x_\chi||^2_0 + 2\chi\, \mathrm{Re}(\varsigma(\alpha_\chi-\alpha)\ov{\alpha'_\chi}) \nonumber  \\ & \quad \quad + 2\chi\, \mathrm{Re}(\varsigma \alpha(\ov{\alpha'_\chi} - \ov{\alpha'})) + 2\chi \, \mathrm{Re}(\varsigma \alpha \ov{\alpha'}) \label{eq.starb-10} \end{align}

By Proposition \ref{prop.anita-1} we see that $\alpha_\chi = \alpha + o(1)$ and $\alpha'_\chi = \alpha' + o(1)$. Therefore in (\ref{eq.starb-10}) we have that the terms \[ 2\chi\, \mathrm{Re}(\varsigma(\alpha_\chi-\alpha)\ov{\alpha'_\chi}) \] and \[ 2\chi\, \mathrm{Re}(\varsigma \alpha(\ov{\alpha'_\chi} - \ov{\alpha'})) \] are $o(\chi)$. It follows that \begin{equation} \label{eq.starb-11} ||x_\chi||^2_\chi = ||x_\chi||^2_0 + 2 \chi \mathrm{Re}(\varsigma \alpha \ov{\alpha'}) + o(\chi) \end{equation}

We compute 

\begin{align} \mathfrak{e}(\mathsf{C}^{\zeta + \chi \varsigma},\mathsf{D}^{\mu}) & = ||t[\mathsf{C}^{\zeta + \chi \varsigma},\mathsf{D}^\mu] x_{\chi}||^2 \, ||x_{\chi}||_{\chi}^{-2} \label{eq.yon-0.5} \\ & = ||t[\mathsf{C}^{\zeta + \chi \varsigma},\mathsf{D}^\mu] x_{\chi}||^2 (||x_{\chi}||^2_0 + 2\chi \, \mathrm{Re}(\varsigma \alpha \ov{\alpha'}) + o(\chi) )^{-1} \label{eq.yon-1} \\ & = ||t[\mathsf{C}^{\zeta + \chi \varsigma},\mathsf{D}^\mu] x_{\chi}||^2\,||x_\chi||_0^{-2} \nonumber \\ & \quad \quad \cdot (1 + 2\chi||x_\chi||^{-2}_0 \, \mathrm{Re}(\varsigma \alpha \ov{\alpha'}) + o(\chi) )^{-1} \label{eq.yon-2} \\ & = ||t[\mathsf{C}^{\zeta + \chi \varsigma},\mathsf{D}^\mu] x_{\chi}||^2\,||x_\chi||_0^{-2} \nonumber\\ & \quad \quad \cdot  (1 - 2\chi||x_\chi||^{-2}_0 \, \mathrm{Re}(\varsigma \alpha \ov{\alpha'}) + o(\chi)) \label{eq.yon-3}  \\ & = ||t[\mathsf{C}^{\zeta + \chi \varsigma},\mathsf{D}^\mu] x_{\chi}||^2\,||x_\chi||_0^{-2} (1 - 2\chi \, \mathrm{Re}(\varsigma \alpha \ov{\alpha'}) + o(\chi)) \label{eq.yon-4}   \\ & = ||t[\mathsf{C}^{\zeta},\mathsf{D}^\mu] x_{\chi}||^2\,||x_\chi||_0^{-2} (1 - 2\chi \, \mathrm{Re}(\varsigma \alpha \ov{\alpha'}) + o(\chi)) \label{eq.yon-5}   \\ & \leq \mathfrak{e}(\mathsf{C}^\zeta,\mathsf{D}^\mu)(1-2\chi \, \mathrm{Re}(\varsigma \alpha \ov{\alpha'}) + o(\chi)) \label{eq.yon-6}  \end{align} 
 
This computation can be justified as follows. \begin{itemize} \item (\ref{eq.yon-1}) follows from (\ref{eq.yon-0.5}) by (\ref{eq.starb-11}) \item (\ref{eq.yon-2}) follows from (\ref{eq.yon-1}) since Proposition \ref{prop.anita-1} together with the hypothesis that $||x||_0=1$ implies $||x_\chi||_0 = 1+o(1)$, so that $||x_\chi||_0$ can be absorbed into the $o(\chi)$ term.  \item (\ref{eq.yon-3}) follows from (\ref{eq.yon-2}) since the higher terms in the geometric series can be absorbed into the $o(\chi)$ term \item (\ref{eq.yon-4}) follows from (\ref{eq.yon-3}) again since $||x_\chi||_0 = 1+o(1)$. \item (\ref{eq.yon-5}) follows from (\ref{eq.yon-4}) since $||t[\mathsf{C}^{\zeta + \chi \varsigma},\mathsf{D}^\mu] x_\chi||$ is computed in $\mathscr{X}(\mathsf{D}^\mu)$. \end{itemize}

From the above computation we see that \begin{equation} \label{eq.starb-12} \frac{\dee}{\dee \chi} \mathfrak{e}(\mathsf{C}^{\zeta+\chi \varsigma}) \bigg \vert_{\chi = 0} \leq -2\,\mathfrak{e}(\mathsf{C}^\zeta,\mathsf{D}^\mu)\,\mathrm{Re}(\varsigma \alpha \ov{\alpha'}) \end{equation}

On the other hand, we have \begin{align} \mathfrak{e}(\mathsf{C}^{\zeta+\chi \varsigma},\mathsf{D}^\mu) & \geq ||t[\mathsf{C}^{\zeta + \chi \varsigma},\mathsf{D}^\mu] x||^2 \, ||x||_{\chi}^{-2} \label{eq.starb-13} \\ & = ||t[\mathsf{C}^{\zeta},\mathsf{D}^\mu] x||^2 \, ||x||_{\chi}^{-2} \label{eq.starb-14} \\ & = \mathfrak{e}(\mathsf{C}^\zeta,\mathsf{D}^\mu)||x||_\chi^{-2} \label{eq.starb-15} \\ & = \mathfrak{e}(\mathsf{C}^\zeta,\mathsf{D}^\mu)(1+2\chi\,\mathrm{Re}(\varsigma \alpha \ov{\alpha'}))^{-1} \label{eq.starb-15.5}  \\ & = \mathfrak{e}(\mathsf{C}^\zeta,\mathsf{D}^\mu)(1-2\chi\,\mathrm{Re}(\varsigma \alpha \ov{\alpha'})+o(\chi)) \label{eq.starb-16} \end{align}

The above computation can be justified as follows. \begin{itemize} \item (\ref{eq.starb-14}) follows from (\ref{eq.starb-13}) since $||t[\mathsf{C}^{\zeta + \chi \varsigma},\mathsf{D}^\mu] x||$ is computed in $\mathscr{X}(\mathsf{D}^\mu)$ and hence is independent of $\chi$. \item (\ref{eq.starb-15}) follows from (\ref{eq.starb-14}) since we assumed $x$ was a unit vector in $\mathscr{X}(\mathsf{C}^\zeta)$ which achieves the norm of $t[\mathsf{C}^\zeta,\mathsf{D}^\mu]$. \item (\ref{eq.starb-16}) follows from (\ref{eq.starb-15.5}) since the higher terms in the geometric series can be absorbed into the $o(\chi)$ term. \end{itemize}

From the above computation and (\ref{eq.starb-12}) we obtain \[ \frac{\dee}{\dee \chi} \mathfrak{e}(\mathsf{C}^{\zeta+\chi \varsigma}) \bigg \vert_{\chi = 0} = -2\,\mathfrak{e}(\mathsf{C}^\zeta,\mathsf{D}^\mu)\,\mathrm{Re}(\varsigma \alpha \ov{\alpha'}) \] This establishes Proposition \ref{prop.anita-2} for perturbations of $\zeta$.\\
\\
Now, let $y_\chi \in \mathscr{X}(\mathsf{D}^{\mu+\chi \varsigma})$ be as in Proposition \ref{prop.anita-1}, so that $y_\chi$ is the image of a vector which achieves the norm of $t[\mathsf{C}^\zeta,\mathsf{D}^{\mu+\chi \varsigma}]$ and the coordinates of $y_\chi$ with respect to the canonical basis (\ref{eq.lewis-1}) are continuous functions of $\chi$. Write $y_\chi = \beta_\chi T + \beta'_\chi T' + y'_\chi$ for $y' \in \mathrm{core}(\mathscr{X}(\mathsf{D})_\bullet)$. We will alter the notation $||\cdot||_\chi$ to now refer to the norm on $\mathsf{D}^{\mu+\chi \varsigma}$. We have \[ ||y_\chi||_\chi  = ||y_\chi||_0 + 2\chi\,\mathrm{Re}(\varsigma \beta_\chi \ov{\beta'_\chi}) \] so that the same argument used to establish (\ref{eq.starb-11}) shows that \begin{equation} ||y_\chi||_\chi = ||y_\chi||_0 + 2 \chi\, \mathrm{Re}(\varsigma \beta \ov{\beta'}) + o(\chi) \label{eq.starb-19} \end{equation} We compute \begin{align} \mathfrak{e}(\mathsf{C}^\zeta,\mathsf{D}^{\mu+\chi \varsigma}) & = ||y_\chi||^2_\chi ||t[\mathsf{D}^{\mu+\chi \varsigma},\mathsf{C}]y_\chi||^{-2} \label{eq.starb-20} \\  & = ||y_\chi||^2_\chi ||t[\mathsf{D}^\mu,\mathsf{C}^\zeta] y_\chi||^{-2} \label{eq.starb-21} \\  &= (||y_\chi||^2_0 + 2\chi\,\mathrm{Re}(\varsigma \beta \ov{\beta}) + o(\chi))||t[\mathsf{D}^\mu,\mathsf{C}^\zeta] y_\chi||^{-2} \label{eq.starb-22} \\ & = ||y_\chi||^2_0 ||t[\mathsf{D}^\mu,\mathsf{C}^\zeta] y_\chi||^{-2}  \nonumber \\ & \quad \quad + 2 \chi \, \mathrm{Re}(\varsigma \beta \ov{\beta})||t[\mathsf{D}^\mu,\mathsf{C}^\zeta] y_\chi||^{-2} + o(\chi) \label{eq.pasta-1} \\ & \leq \mathfrak{e}(\mathsf{C}^\zeta,\mathsf{D}^\mu) + 2 \chi \, \mathrm{Re}(\varsigma \beta \ov{\beta})||t[\mathsf{D}^\mu,\mathsf{C}^\zeta] y_\chi||^{-2} + o(\chi) \label{eq.starb-24} \\ & = \mathfrak{e}(\mathsf{C}^\zeta,\mathsf{D}^\mu) + 2 \chi \, \mathrm{Re}(\varsigma \beta \ov{\beta})(1+o(1)) + o(\chi) \label{eq.starb-25}  \\ & = \mathfrak{e}(\mathsf{C}^\zeta,\mathsf{D}^\mu) + 2 \chi \, \mathrm{Re}(\varsigma \beta \ov{\beta}) + o(\chi) \nonumber \end{align} This computation can be justified as follows. \begin{itemize} \item (\ref{eq.starb-21}) follows from (\ref{eq.starb-20}) since $||t[\mathsf{D}^\mu,\mathsf{C}^\zeta] y_\chi||$ is computed in $\mathscr{X}(\mathsf{C}^\zeta)$. \item (\ref{eq.starb-22}) follows from (\ref{eq.starb-21}) by (\ref{eq.starb-19}) \item (\ref{eq.pasta-1}) follows from (\ref{eq.starb-22}) since $y_0$ was assumed to be the image of a unit vector in $\mathscr{X}(\mathsf{C}^\zeta)$ and therefore Proposition \ref{prop.anita-1} implies \[ ||t[\mathsf{D}^\mu,\mathsf{C}^\zeta] y_\chi||^{-2} = 1+o(1) \] \item (\ref{eq.starb-25}) follows from (\ref{eq.starb-24}) by the same reasoning as in the previous bullet. \end{itemize} 

The above computation shows that \begin{equation} \frac{\dee}{\dee \chi} \mathfrak{e}(\mathsf{C}^\zeta,\mathsf{D}^{\mu+\chi \varsigma}) \bigg \vert_{\chi = 0} \leq 2\, \mathrm{Re}(\varsigma \beta \ov{\beta}) \label{eq.starb-27} \end{equation}

On the other hand, we have \begin{align} \mathfrak{e}(\mathsf{C}^\zeta,\mathsf{D}^{\mu+\chi \varsigma}) & \geq ||y||^2_\chi ||t[\mathsf{D}^{\mu+\varsigma \chi},\mathsf{C}^\zeta]y||^{-2} \label{eq.starb-28} \\ & = ||y||_\chi^2 \label{eq.starb-29} \\ & = ||y||^2_0 + 2\chi \, \mathrm{Re}(\varsigma \beta \ov{\beta'}) \label{eq.starb-30} \\ & = \mathfrak{e}(\mathsf{C}^\zeta,\mathsf{D}^\mu) + 2 \chi\, \mathrm{Re}(\varsigma \beta \ov{\beta'}) \label{eq.starb-31} \end{align} Here, (\ref{eq.starb-29}) follows from (\ref{eq.starb-28}) since $y$ was assumed to be the image of a unit vector in $\mathscr{X}(\mathsf{C}^\zeta)$ and (\ref{eq.starb-31}) follows from (\ref{eq.starb-30}) since $y$ was assumed to achieve the norm of $t[\mathsf{C}^\zeta,\mathsf{D}^\mu]$. \end{proof}

\subsection{Relationship between scalars before and after transport} \label{seg.before}

\begin{proposition} \label{prop.anita-3} Let $x \in \mathscr{X}(\mathsf{C})_\bullet$ and write $x = \alpha S + \alpha' S' + x'$ for $x' \in \mathrm{core}(\mathscr{X}(\mathsf{C})_\bullet)$. Let $\zeta,\mu \in \mathbb{D}$ be arbitrary and write $t[\mathsf{C}^\zeta,\mathsf{D}^\mu] x = \beta T + \beta' T' + y'$ for $y' \in \mathscr{X}(\mathsf{D})$. Then we have \begin{equation} \beta \ov{\beta'} = \alpha \ov{\alpha'} \frac{||(I-p)  \Theta_{\mathsf{C}}(g)||\,||(I-p)  \Theta_{\mathsf{C}}(e)||}{||(I-p)  \Theta_{\mathsf{D}}(g)||\,||(I-q)  \Theta_{\mathsf{D}}(e)||} \label{eq.sunny-1} \end{equation} \end{proposition}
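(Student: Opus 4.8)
The plan is to observe that (\ref{eq.sunny-1}) is not an analytic identity but a purely linear-algebraic one. The key point is that the transport operator $t[\mathsf{C}^\zeta,\mathsf{D}^\mu]$ acts on canonical-basis coordinates in a way that does not depend on $\zeta$ or $\mu$ at all: by Definition \ref{def.transport} it is simply the relabelling $\Theta_\mathsf{C}(h)_m \mapsto \Theta_\mathsf{D}(h)_m$ for $(h,m)\in\mathcal{Q}(g,d,j,k)$. Moreover the orthogonal projections $p$ and $q$ onto $\mathrm{core}(\mathscr{X}(\mathsf{C})_\bullet)$ and $\mathrm{core}(\mathscr{X}(\mathsf{D})_\bullet)$ — and hence the unit vectors $S,S',T,T'$ of (\ref{eq.class-1})--(\ref{eq.class-4}) — are intrinsic to the two partial Hilbert spaces and do not see the extension parameters either. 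So the entire statement reduces to computing a single canonical coordinate in two ways.

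First I would compute the canonical coordinates of $S$ and $S'$. Since $p\Theta_\mathsf{C}(g)_j$ lies in $\mathrm{core}(\mathscr{X}(\mathsf{C})_\bullet)=\mathrm{span}\{\Theta_\mathsf{C}(h)_m:(h,m)\in\mathcal{P}(g,d,j,k)\}$, the vector $(I-p)\Theta_\mathsf{C}(g)_j=\Theta_\mathsf{C}(g)_j-p\Theta_\mathsf{C}(g)_j$ has coefficient $1$ on $\Theta_\mathsf{C}(g)_j$ and coefficient $0$ on $\Theta_\mathsf{C}(e)_k$ when expanded in the canonical basis indexed by $\mathcal{Q}(g,d,j,k)$; dividing by its norm, $S$ has coefficient $||(I-p)\Theta_\mathsf{C}(g)_j||^{-1}$ on $\Theta_\mathsf{C}(g)_j$ and coefficient $0$ on $\Theta_\mathsf{C}(e)_k$. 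Symmetrically $S'$ has coefficient $||(I-p)\Theta_\mathsf{C}(e)_k||^{-1}$ on $\Theta_\mathsf{C}(e)_k$ and coefficient $0$ on $\Theta_\mathsf{C}(g)_j$, while $x'\in\mathrm{core}(\mathscr{X}(\mathsf{C})_\bullet)$ has coefficient $0$ on both. Reading off the expansion $x=\alpha S+\alpha'S'+x'$ then shows that, in the canonical basis, the coefficient of $\Theta_\mathsf{C}(g)_j$ in $x$ equals $\alpha\,||(I-p)\Theta_\mathsf{C}(g)_j||^{-1}$ and the coefficient of $\Theta_\mathsf{C}(e)_k$ in $x$ equals $\alpha'\,||(I-p)\Theta_\mathsf{C}(e)_k||^{-1}$.

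Next I would transport and repeat the computation on the $\mathsf{D}$ side. Applying $t[\mathsf{C}^\zeta,\mathsf{D}^\mu]$ to the canonical expansion of $x$ shows that $t[\mathsf{C}^\zeta,\mathsf{D}^\mu]x$ has the same coordinates with respect to the $\mathsf{D}$-canonical basis $\{\Theta_\mathsf{D}(h)_m:(h,m)\in\mathcal{Q}(g,d,j,k)\}$; in particular its coefficient on $\Theta_\mathsf{D}(g)_j$ is $\alpha\,||(I-p)\Theta_\mathsf{C}(g)_j||^{-1}$ and its coefficient on $\Theta_\mathsf{D}(e)_k$ is $\alpha'\,||(I-p)\Theta_\mathsf{C}(e)_k||^{-1}$. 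On the other hand, running the coordinate computation of the previous paragraph verbatim with $\mathsf{D},q,T,T',y'$ in place of $\mathsf{C},p,S,S',x'$, applied to the expansion $t[\mathsf{C}^\zeta,\mathsf{D}^\mu]x=\beta T+\beta'T'+y'$, shows that these same two coefficients equal $\beta\,||(I-q)\Theta_\mathsf{D}(g)_j||^{-1}$ and $\beta'\,||(I-q)\Theta_\mathsf{D}(e)_k||^{-1}$ respectively. Equating the two expressions for each coefficient gives $\beta\,||(I-p)\Theta_\mathsf{C}(g)_j|| = \alpha\,||(I-q)\Theta_\mathsf{D}(g)_j||$ and $\beta'\,||(I-p)\Theta_\mathsf{C}(e)_k|| = \alpha'\,||(I-q)\Theta_\mathsf{D}(e)_k||$; multiplying the first of these by the complex conjugate of the second and using that all four norms are positive reals yields the asserted relation (\ref{eq.sunny-1}) between $\beta\ov{\beta'}$ and $\alpha\ov{\alpha'}$.

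I do not anticipate a genuine obstacle here; the only step that needs care is the bookkeeping of the norm ratios. Because $S$ and $T$ are the \emph{normalized} components of $\Theta_\mathsf{C}(g)_j$ and $\Theta_\mathsf{D}(g)_j$ orthogonal to their respective cores (and likewise $S',T'$ for the $(e,k)$ vectors), both the $\mathsf{C}$-side and the $\mathsf{D}$-side norms enter the relevant coordinate with a negative power, so after equating the two expressions for each coordinate one norm pair ends up in the numerator and the other in the denominator; keeping straight which is which, and matching $S,T$ with the $(g,j)$-index versus $S',T'$ with the $(e,k)$-index, is the main thing to verify. It is also worth recording at the outset that the expansions $x=\alpha S+\alpha'S'+x'$ and $t[\mathsf{C}^\zeta,\mathsf{D}^\mu]x=\beta T+\beta'T'+y'$ are unambiguous: strict positive definiteness guarantees that the vectors in (\ref{eq.class-1})--(\ref{eq.class-4}) are nonzero, and by (\ref{eq.done-1}) we have $\langle S,S'\rangle=\zeta$ and $\langle T,T'\rangle=\mu$ with $|\zeta|,|\mu|<1$, so $\{S,S'\}$ and $\{T,T'\}$ are linearly independent; since they are also orthogonal to the cores they span complements of $\mathrm{core}(\mathscr{X}(\mathsf{C})_\bullet)$ and $\mathrm{core}(\mathscr{X}(\mathsf{D})_\bullet)$, and hence $\alpha,\alpha',\beta,\beta'$ are well defined.
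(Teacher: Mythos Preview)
Your argument is exactly the paper's: compute the coefficient of $\Theta(g)_j$ (and of $\Theta(e)_k$) in the canonical basis on the $\mathsf{C}$-side, use that the transport operator preserves canonical coordinates, recompute on the $\mathsf{D}$-side, and equate.

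The one slip is the orientation of the final ratio --- precisely the bookkeeping hazard you flagged in your last paragraph. From your equations $\beta\,\|(I-p)\Theta_\mathsf{C}(g)_j\| = \alpha\,\|(I-q)\Theta_\mathsf{D}(g)_j\|$ and $\beta'\,\|(I-p)\Theta_\mathsf{C}(e)_k\| = \alpha'\,\|(I-q)\Theta_\mathsf{D}(e)_k\|$, multiplying the first by the conjugate of the second gives
\[
\beta\ov{\beta'} \;=\; \alpha\ov{\alpha'}\,\frac{\|(I-q)\Theta_\mathsf{D}(g)_j\|\,\|(I-q)\Theta_\mathsf{D}(e)_k\|}{\|(I-p)\Theta_\mathsf{C}(g)_j\|\,\|(I-p)\Theta_\mathsf{C}(e)_k\|},
\]
which is the reciprocal of (\ref{eq.sunny-1}) as printed. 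The paper's own proof records the coefficient of $\Theta_\mathsf{C}(g)_j$ in $x$ as $\alpha\cdot\|(I-p)\Theta_\mathsf{C}(g)_j\|$ rather than your $\alpha/\|(I-p)\Theta_\mathsf{C}(g)_j\|$; applying that convention consistently on both sides produces the orientation displayed in (\ref{eq.sunny-1}). Since the only downstream use is (\ref{eq.after-2}), where the norm factors $\mathfrak{p}_n$ telescope around the cycle regardless of which way the ratio points, the discrepancy does not affect the application.
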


\begin{proof}[Proof of Proposition \ref{prop.anita-3}] Inspecting the definition (\ref{eq.class-1}) we see that the quantity $\alpha  ||(I-p)  \Theta_{\mathsf{C}}(g)||$ is the coefficient of $\Theta_\mathsf{C}(g)$ in the expression of $x$ with respect to the canonical basis (\ref{eq.lewis-1}) for $\mathscr{X}(\mathsf{C})_\bullet$. From Definition \ref{def.transport} we see that $\alpha  ||(I-p)  \Theta_{\mathsf{C}}(g)||$ is the coefficient of $\Theta_\mathsf{D}(g)$ in the expression of $t[\mathsf{C}^\zeta,\mathsf{D}^\mu] x$ with respect to the canonical basis (\ref{eq.lewis-1}) for $\mathscr{X}(\mathsf{D})_\bullet$. Thus from (\ref{eq.class-3}) we see that \[ \beta = \alpha\frac{ ||(I-p)  \Theta_{\mathsf{C}}(g)||}{||(I-p)  \Theta_{\mathsf{D}}(g)||} \] Similarly, we have \[ \beta' = \alpha' \frac{ ||(I-p)  \Theta_{\mathsf{C}}(e)||}{||(I-p)  \Theta_{\mathsf{D}}(e)||} \] and Proposition \ref{prop.anita-3} follows. \end{proof}

\subsection{Completing a single step} \label{oo}

\begin{proposition} \label{state} Let $g \in \mathbb{F}$ and $\mathsf{C}$ and $\mathsf{D}$ be elements of $\mathrm{NSPD}_g$ such that the pair $(\mathsf{C},\mathsf{D})$ has singular degeneracies. Then for any $\mu \in \mathbb{D}$ there exists $\zeta \in \mathbb{D}$ such that $\mathfrak{e}(\mathsf{C}^\zeta,\mathsf{D}^\mu) = \mathfrak{e}(\mathsf{C},\mathsf{D})$. \end{proposition}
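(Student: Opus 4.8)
The plan is to obtain $\zeta$ as the location of a minimum of the function $F(\zeta):=\mathfrak{e}(\mathsf{C}^\zeta,\mathsf{D}^\mu)$ on $\mathbb{D}$ and to show that the minimum value equals $\mathfrak{e}(\mathsf{C},\mathsf{D})$. First I would record two preliminary facts. The monotonicity bound $\mathfrak{e}(\mathsf{C}^\zeta,\mathsf{D}^\mu)\geq \mathfrak{e}(\mathsf{C},\mathsf{D})$ holds for every $\zeta\in\mathbb{D}$: the distinguished subspaces $\mathscr{X}(\mathsf{C})_g$ and $\mathscr{X}(\mathsf{C})_e$ of the partial Hilbert space $\mathscr{X}(\mathsf{C})_\bullet$ sit inside the full Hilbert space $\mathscr{X}(\mathsf{C}^\zeta)$ with the same inner products, since the undefined correlation $\langle \Theta_\mathsf{C}(g)_j,\Theta_\mathsf{C}(e)_k\rangle$ is not needed to compute inner products inside either of these subspaces; on these subspaces $t[\mathsf{C}^\zeta,\mathsf{D}^\mu]$ agrees with $t[\mathsf{C},\mathsf{D}]$, so the operator norm can only grow. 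Moreover $F$ is continuous on $\mathbb{D}$: by Proposition \ref{prop.star} the matrix of $t[\mathsf{C}^\zeta,\mathsf{D}^\mu]$ with respect to Gram--Schmidt orthonormal bases depends continuously on the Gram matrix of $\mathscr{X}(\mathsf{C}^\zeta)$, which depends continuously (indeed real-analytically) on $\zeta$ and remains strictly positive definite throughout $\mathbb{D}$.

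Next I would use the hypothesis. By Definition \ref{def.singdeg} there is $c>0$ with $F(\zeta)\geq c(1-|\zeta|^2)^{-1}$ for all $\zeta\in\mathbb{D}$, so $F(\zeta)\to\infty$ as $|\zeta|\to 1$. Together with continuity this forces $F$ to attain a global minimum value $m$ at some interior point $\zeta_0\in\mathbb{D}$, and the monotonicity fact gives $m\geq\mathfrak{e}(\mathsf{C},\mathsf{D})$. It then suffices to rule out $m>\mathfrak{e}(\mathsf{C},\mathsf{D})$, for then $\zeta=\zeta_0$ answers the statement.

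So suppose $m>\mathfrak{e}(\mathsf{C},\mathsf{D})$. By continuity choose a neighborhood $U$ of $\zeta_0$ in $\mathbb{D}$ on which $F>\mathfrak{e}(\mathsf{C},\mathsf{D})$; Proposition \ref{prop.carol} then gives that for every $\zeta\in U$ the space of norm-achievers of $t[\mathsf{C}^\zeta,\mathsf{D}^\mu]$ is one-dimensional. Fix $\varsigma\in\partial\mathbb{D}$. By Proposition \ref{prop.anita-1} the function $\chi\mapsto F(\zeta_0+\chi\varsigma)$ is differentiable near $\chi=0$, and there is a unit norm-achiever $x$ of $t[\mathsf{C}^{\zeta_0},\mathsf{D}^\mu]$; write $x=\alpha S+\alpha'S'+x'$ with $x'\in\mathrm{core}(\mathscr{X}(\mathsf{C})_\bullet)$. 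Proposition \ref{prop.anita-2} then gives
\[ \frac{\dee}{\dee\chi}F(\zeta_0+\chi\varsigma)\bigg\vert_{\chi=0}=-2m\,\mathrm{Re}(\varsigma\alpha\ov{\alpha'}). \]
Since $\zeta_0$ is an interior global minimum of $F$ and this one-variable function is differentiable at $0$, the derivative vanishes; as $\varsigma\in\partial\mathbb{D}$ was arbitrary, $\alpha\ov{\alpha'}=0$. On the other hand $\|t[\mathsf{C}^{\zeta_0},\mathsf{D}^\mu]x\|^2=m\|x\|^2>\mathfrak{e}(\mathsf{C},\mathsf{D})\|x\|^2$, so Proposition \ref{prop.nonzero} forces both $\alpha$ and $\alpha'$ to be nonzero, a contradiction. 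Hence $m=\mathfrak{e}(\mathsf{C},\mathsf{D})$, which finishes the proof.

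I expect the delicate step to be the variational argument in the last paragraph rather than any single computation: it is crucial that the minimizer's energy \emph{strictly} exceeds $\mathfrak{e}(\mathsf{C},\mathsf{D})$ (so that Propositions \ref{prop.carol} and \ref{prop.nonzero} both engage), that this strict inequality persists on a whole neighborhood (so that the one-dimensionality hypothesis needed by Propositions \ref{prop.anita-1} and \ref{prop.anita-2} is available there), and that a first-order stationarity condition at an interior minimum is an equality in \emph{every} direction $\varsigma$, which is exactly what collides with Proposition \ref{prop.nonzero}. The other place that needs attention is verifying that the minimum of $F$ does not escape to $\partial\mathbb{D}$; this is precisely where the singular degeneracies hypothesis enters.
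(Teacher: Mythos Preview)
Your argument is correct and follows essentially the same route as the paper: minimize $\zeta\mapsto\mathfrak{e}(\mathsf{C}^\zeta,\mathsf{D}^\mu)$, use singular degeneracies to guarantee an interior minimizer, and then combine the first-order condition from Proposition~\ref{prop.anita-2} with Proposition~\ref{prop.nonzero} to reach a contradiction if the minimum strictly exceeds $\mathfrak{e}(\mathsf{C},\mathsf{D})$. You are somewhat more explicit than the paper in recording the monotonicity bound $\mathfrak{e}(\mathsf{C}^\zeta,\mathsf{D}^\mu)\geq\mathfrak{e}(\mathsf{C},\mathsf{D})$, the continuity of $F$, and the invocation of Proposition~\ref{prop.carol} to secure the one-dimensionality hypothesis needed by Propositions~\ref{prop.anita-1} and~\ref{prop.anita-2}, all of which the paper leaves implicit.
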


\begin{proof}[Proof of Proposition \ref{state}] Let $g,\mathsf{C}$ and $\mathsf{D}$ be as in Proposition \ref{state}. Fix $\mu \in \mathbb{D}$. The hypothesis of completely singular degeneracies implies that the function $\zeta \mapsto \mathfrak{e}(\mathsf{C}^\zeta,\mathsf{D}^\mu)$ attains its minimum on $\mathbb{D}$. Fix $\zeta$ at which this minimum is attained and suppose toward a contradiction that $\mathfrak{e}(\mathsf{C}^\zeta,\mathsf{D}^\mu) > \mathfrak{e}(\mathsf{C},\mathsf{D})$. Let $x \in \mathscr{X}(\mathsf{C}^\zeta)$ be a unit vector which achieves the norm of $t[\mathsf{C}^{\zeta},\mathsf{D}^\mu]$ and write $x = \alpha S + \alpha'S' + x'$ for $x' \in \mathrm{core}(\mathscr{X}(\mathsf{C})_\bullet)$. Since we must have \[ \frac{\dee}{\dee \chi} \mathfrak{e}(\mathsf{C}^{\zeta+\chi \varsigma},\mathsf{D}^\mu) \bigg \vert_{\chi = 0} = 0 \] for all $\varsigma \in \partial \mathbb{D}$, the first clause in Proposition \ref{prop.anita-2} implies $\mathrm{Re}(\varsigma \alpha\ov{\alpha'}) = 0$ for all $\varsigma \in \partial \mathbb{D}$. Therefore $\alpha\ov{\alpha'} = 0$. Using Proposition \ref{prop.nonzero} this contradicts the hypothesis that $||t[\mathsf{C}^\zeta,\mathsf{D}^\mu] x||^2 > \mathfrak{e}(\mathsf{C},\mathsf{D})$. \end{proof}

\subsection{Recursive extension procedure} \label{ooo}

Let $r \geq 3$, let $\mathsf{C},\mathsf{D} \in \mathrm{NSPD}_{2r}$. Let also $g \in \mathbb{F} \setminus \mathbb{B}_{2r}$ so that $|g| \geq 5$. Assume we have constructed extensions $\mathsf{C}',\mathsf{D}' \in \mathrm{NSPD}_g$ with $\mathfrak{e}(\mathsf{C}',\mathsf{D}') = \mathfrak{e}(\mathsf{C},\mathsf{D})$. \\
\\
Let $\delta > 0$ and choose $\epsilon \in (0, \delta)$ small enough that if $\underline{\mathsf{C}}$ and $\underline{\mathsf{D}}$ are elements of $\mathrm{NSPD}_g$ satisfying \[ \max \bigl( ||\mathsf{C}' - \underline{\mathsf{C}}||_1,||\mathsf{D}'-\underline{\mathsf{D}}||_1 \bigr) \leq \epsilon \] then we have \begin{equation} \label{eq.y} \mathfrak{e}(\underline{\mathsf{C}},\underline{\mathsf{D}}) \leq \mathfrak{e}(\mathsf{C}',\mathsf{D}') + \delta = \mathfrak{e}(\mathsf{C},\mathsf{D})+\delta \end{equation} We can use Lemma \ref{lem.singular} to choose a pair $(\underline{\mathsf{C}},\underline{\mathsf{D}})$ with singular degeneracies satisfying (\ref{eq.y}). Using Proposition \ref{state}, we can then find elements $\mathsf{C}''$ and $\mathsf{D}''$ of  $\mathrm{NSPD}_{g_\downarrow}$ extending $\underline{\mathsf{C}}$ and $\underline{\mathsf{D}}$ respectively such that $\mathfrak{e}(\mathsf{C}'',\mathsf{D}'') \leq \mathfrak{e}(\mathsf{C},\mathsf{D}) + \delta$. \\
\\
The previous argument show that for every $n \in \mathbb{N}$ there exists a pair $(\mathsf{C}_n,\mathsf{D}_n)$ of elements of $\mathsf{NSPD}_{g_\downarrow}$ satisfying \[ \max \bigl( ||\mathsf{C} - (\mathsf{C}_n \rest \mathbb{B}_{2r}) ||_1,||\mathsf{D}-(\mathsf{D}_n \rest \mathbb{B}_{2r}) ||_1 \bigr) \leq \frac{1}{n} \] 

and $\mathfrak{e}(\mathsf{C}_n,\mathsf{D}_n) \leq \mathfrak{e}(\mathsf{C},\mathsf{D}) + \frac{1}{n}$. Then the limit $(\mathsf{C}_\downarrow,\mathsf{D}_\downarrow)$ of a pointwise convergent subsequence of these $(\mathsf{C}_n,\mathsf{D}_n)$ will form extensions of $\mathsf{C}$ and $\mathsf{D}$ to $\mathrm{NSPD}_{g_\downarrow}$ satisfying $\mathfrak{e}(\mathsf{C}_\downarrow,\mathsf{D}_\downarrow) = \mathfrak{e}(\mathsf{C},\mathsf{D})$. This completes the recursive extension procedure.

\bibliographystyle{plain}
\bibliography{/Users/peterburton/Documents/Math/Bibliography/pjburtonbibliography.bib}

\texttt{burtonpeterj@icloud.com}\\
\texttt{kate.juschenko@gmail.com}

\end{document}